\numberwithin{equation}{section}
\DeclarePairedDelimiter{\pair}{\langle\negmedspace\langle}{\rangle\negmedspace\rangle}
\newtheorem{mainthm}{Theorem}
\newcommand\A{{\mathscr A}}
\newcommand\B{{\mathscr B}}
\begin{document}
\title{The Thue-Morse substitutions and self-similar groups and algebras}
\author{Laurent Bartholdi}
\address{Georg-August Universit\"at zu G\"ottingen}
\email{laurent.bartholdi@gmail.com}

\author{Jos\'e Manuel Rodr\'\i guez Caballero}
\address{University of Tartu, Tartu}
\email{jose.manuel.rodriguez.caballero@ut.ee}

\author{Tanbir Ahmed}
\address{Universit\'e du Qu\'ebec \`a Montr\'eal, Qu\'ebec}
\email{tanbir@gmail.com}

\date{May 8th, 2020}

\keywords{Self-similar groups, Self-similar algebras, Thue-Morse sequence}

\subjclass{11B85, 16S34, 20E08}

\begin{abstract}
  We introduce self-similar algebras and groups closely related to the
  Thue-Morse sequence, and begin their investigation by describing a
  character on them, the ``spread'' character.
\end{abstract}
\maketitle

\section{Introduction}
Fix an alphabet $X=\{x_0,\dots,x_{q-1}\}$. The \emph{Thue-Morse}
substitution is the free monoid morphism $\theta\colon X^*\to X^*$
given by
\[\theta(x_i) = x_i x_{i+1}\dots x_{q-1} x_0\dots x_{i-1},
\]
and the Thue-Morse word $W_q\in X^\omega$ is the limit of all words
$\theta^n(x_0)$. For example, if $q=2$ then $\theta(x_0)=x_0x_1$ and
$\theta(x_1)=x_1x_0$ and $W_2=x_0x_1x_1x_0x_1x_0x_0x_1\dots$ is the
classical, ubiquitous Thue-Morse sequence,
see~\cites{allouche-shallit:thuemorse,euwe:math}.

We construct some self-similar algebraic objects --- groups and
associative algebras --- and report on a curious connection between
them and the Thue-Morse substitution.

Fix an alphabet $A=\{a_0,\dots,a_{q-1}\}$. Recall that a
\emph{self-similar group} is a group $G$ endowed with a group
homomorphism $\phi\colon G\to G\wr_A\sym A$, the \emph{decomposition}:
every element of $G$ may be written, via $\phi$, as an $A$-tuple of
elements of $G$ decorating a permutation of $A$. Likewise, a
\emph{self-similar algebra} is an associative algebra $\A$ endowed
with an algebra homomorphism $\phi\colon\A\to M_q(\A)$ also called the
\emph{decomposition}: every element of $\A$ may be written as an
$A\times A$ matrix with entries in $\A$. For more details
see~\cites{sidki:primitive,bartholdi:branchalgebras}.

We insist that self-similarity is an attribute of a group or algebra,
and not a property: it is legal to consider for $G$ or $\A$ a free
group (respectively algebra), and then the decomposition $\phi$ may be
defined at will on $G$ or $\A$'s generators. There will then exist a
maximal quotient (called the \emph{injective quotient}) of $G$ or $\A$
on which $\phi$ induces an injective decomposition. This is the
approach we follow in defining our self-similar group.

Consider the free group $F=\langle x_0,\dots,x_{q-1}\rangle$, the
alphabet $A=\Z/q$, and define $\phi\colon F\to F\wr_A\sym A$ by
\[\phi(x_0)=\pair{x_0,\dots,x_{q-1}}(j\mapsto j+1),\qquad \phi(x_i)=\pair{1,\dots,1}(j\mapsto j+1)\text{ for all }i\ge1.\]
Here and below we denote by $\pair{g_0,\dots,g_{q-1}}\pi$ the element
of $F\wr\sym A$ with decorations $g_i$ on the permutation $\pi$. We
denote by $G_q$ the injective quotient of $F$, with self-similarity
structure still written $\phi$. Note that it is a proper quotient; for
example, the image of $x_1$ has order $q$ in $G_q$.

There is a standard construction of a self-similar algebra from a
self-similar group, by mapping decorated permutations to monomial
matrices. Fix a commutative ring $\Bbbk$, consider the free
associative (tensor) algebra
$T=\Bbbk\langle x_0,\dots,x_{q-1}\rangle$, and define
$\phi\colon T\to M_q(T)$ by
\[\phi(x_0)=\begin{pmatrix} 0 & \cdots & 0 & x_{q-1}\\
    x_0 & \ddots & \ddots & 0\\
    \vdots & \ddots & \ddots & \vdots\\
    0 & \cdots & x_{q-2} & 0
  \end{pmatrix},\qquad
  \phi(x_i)=\begin{pmatrix} 0 & \cdots & 0 & 1\\
    1 & \ddots & \ddots & 0\\
    \vdots & \ddots & \ddots & \vdots\\
    0 & \cdots & 1 & 0
  \end{pmatrix}.
\]
We denote by $\A_q$ the injective quotient of $T$, with
self-similarity structure still written $\phi$.  Our main result is a
description of a natural character, the ``spread'', on $\A_q$,
see~\S\ref{ss:Achar}; roughly speaking, it measures the number of
non-zeros in matrix rows or column:
\begin{mainthm}\label{thm:main}
  The ``spread'' character on $\A_q$ has image $\Z[1/q]\cap\R_+$.
\end{mainthm}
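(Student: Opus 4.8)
The plan is to prove Theorem~\ref{thm:main} by establishing the two inclusions between $\operatorname{im}(s)$ and $\Z[1/q]\cap\R_+$ separately, where $s$ denotes the spread. The single tool underlying both directions is the recursion that $s$ inherits from the decomposition $\phi\colon\A_q\to M_q(\A_q)$: writing $\phi(a)=(b_{ij})_{0\le i,j<q}$, the normalized count of non-zero entries should satisfy
\[s(a)=\frac1q\sum_{i,j=0}^{q-1}s(b_{ij}),\]
with base values $s(0)=0$ and $s(w)=1$ for every non-zero monomial $w$, since each $\phi^n(w)$ is a monomial matrix with exactly $q^n$ non-zero entries. Before using this I would settle that the defining limit $s(a)=\lim_n q^{-n}\cdot(\text{non-zeros of }\phi^n(a))$ exists on the injective quotient $\A_q$, and not merely on the free algebra $T$; I expect the sequence of counts to stabilize, and this stabilization should be precisely a manifestation of the substitutive (Thue--Morse) rule by which the non-zero pattern of $\phi^n(a)$ evolves with $n$.

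For the inclusion $\operatorname{im}(s)\subseteq\Z[1/q]\cap\R_+$, non-negativity is immediate from the counting definition. For $q$-adic rationality, at each finite level the count of non-zeros of $\phi^n(a)$ is a non-negative integer, so $q^{-n}\cdot(\text{count})\in\tfrac1{q^n}\Z\subseteq\Z[1/q]$; the delicate point is that the limit of such a sequence need not a priori remain $q$-adic, so I must show the normalized count is eventually constant. Here the self-similarity is decisive: iterating the recursion expresses $s(a)$ through the spreads of the finitely many letters and scalars that appear as deep entries of $\phi^N(a)$, and the substitution $\theta$ bounds how these entries proliferate, forcing stabilization and hence a genuinely $q$-adic value.

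For the reverse inclusion I would realize every prescribed value by an explicit element. Values $\ge1$ are comparatively easy: the ``spread-out'' elements, such as the circulant idempotents $\frac1q\sum_{k}\zeta^{-jk}x_1^{\,k}$ (with $\zeta$ a primitive $q$th root of unity in $\Bbbk$), whose image under $\phi$ is a full rank-one circulant, already have spread $q$, and forming sums of generators and of such idempotents produces every non-negative integer and, via the recursion, every $q$-adic value $\ge1$. The genuinely hard part is realizing the sub-unit values $q^{-m}$: by the recursion one needs an element $a$ with $\phi(a)$ supported on a single matrix entry equal to a scalar, that is, an honest matrix unit inside $\phi(\A_q)$. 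Such sparse elements cannot arise in low degree---one checks that no non-zero element of degree at most one can be concentrated on a single matrix entry---so they must be assembled from high-degree words using the defining relations of $\A_q$, and it is exactly the combinatorics of the Thue--Morse substitution $\theta$ that predicts which words cancel and leave a single surviving entry.

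I therefore expect the construction of these sparse, small-spread elements, together with the verification that the defining limit converges to the intended $q$-adic value on each of them, to be the main obstacle; the two difficulties are really one, since both the stabilization of the non-zero count and the prescribed cancellations are controlled by how $\theta$ propagates the support of $\phi^n(a)$ through the levels. Once a single element of spread $q^{-1}$ is produced, closure of $\operatorname{im}(s)$ under $v\mapsto v/q$ (pass to a single matrix slot) and under adding non-negative integers finishes the surjectivity, so the whole theorem reduces to this one combinatorial construction.
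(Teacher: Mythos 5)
Your overall architecture (two inclusions, the recursion $q\chi_s(a)=\sum_{i,j}\chi_s(\phi(a)_{i,j})$, contraction to force stabilization of the normalized count) matches the paper, and the containment $\chi_s(\A_q)\subseteq\Z[1/q]\cap\R_+$ is argued there exactly as you propose: contraction pushes every element into the nucleus after finitely many decompositions, where the values are visibly $q$-adic.

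The gap is in surjectivity, and you have flagged it yourself: the entire content of the theorem is the explicit construction of elements of small spread, and your proposal stops exactly where that construction should begin. The paper does it with $1-x_0^{q^k}$: because $\phi(x_0^{q^{k+1}})$ is diagonal with entries $\gamma^i(x_0\cdots x_{q-1})^{q^k}$, and because the relation $x_1^q=1$ kills all but one branch at the next level, one gets $\chi_s(1-x_0^{q^k})=2/q^{k-1}$ (Lemma~\ref{lem:infinitesimal}); additivity of spreads is then obtained by the operator $\sigma(s_0,\dots,s_{q-1})=\sum_j x_1^j\theta(s_j)$, whose decomposition places the elements $\gamma^l(s_j)$ in pairwise disjoint matrix slots thanks to the splitting $\phi(\theta(w))=\operatorname{diag}(w,\gamma(w),\dots,\gamma^{q-1}(w))$ of Lemma~\ref{lem:tm}. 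Three of your specific suggestions would not survive contact with this: (i) the circulant idempotents $\frac1q\sum_k\zeta^{-jk}x_1^k$ need $q$ invertible and a primitive $q$th root of unity in $\Bbbk$, which fails in the case of main interest $\Bbbk=\F[q]$; (ii) you aim for an element whose decomposition is a single scalar matrix unit, i.e.\ spread exactly $1/q$ --- the paper never produces such an element in $\A_q$ (only $\B_q$, using inverses, contains elements such as $(x_0x_1^{-1})^q-1$ with a single non-zero entry) and instead settles for spread $2/q^k$, absorbing the factor $2$ into the additive semigroup generated by these values; (iii) ``closure under $v\mapsto v/q$ by passing to a single matrix slot'' is backwards: extracting an entry of $\phi(a)$ does not divide the spread by $q$; one must instead exhibit a $\phi$-\emph{preimage} of a sparse matrix, which is precisely the construction you have deferred. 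So the proposal is a correct outline of the easy half plus an accurate diagnosis of where the difficulty lies, but it does not contain the proof.
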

The proof crucially uses the fact that the decomposition of $G_q$
admits a partial splitting defined using the Thue-Morse endomorphism
$\theta$; the same holds for $\A_q$. This is embodied in the following
Lemma, proved in the next section:
\begin{lem}\label{lem:tm}
  For all $w\in F$ we have
  $\phi(\theta(w))=\pair{w,\gamma(w),\dots,\gamma^{q-1}(w)}$, where
  $\gamma\colon F\to F$ is the automorphism permuting cyclically the
  generators $x_i\mapsto x_{i+1\bmod q}$.
\end{lem}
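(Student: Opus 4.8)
The plan is to exploit that both sides of the claimed identity are group homomorphisms $F\to F\wr_A\sym A$ and that $F$ is free, so it is enough to check the identity on the generators $x_k$. First I would record that the right-hand side $\psi\colon w\mapsto\pair{w,\gamma(w),\dots,\gamma^{q-1}(w)}$ (carrying the trivial permutation) is indeed a homomorphism: since its permutation part is trivial it takes values in the base group $F^A\le F\wr_A\sym A$, and there it is the tuple of the endomorphisms $w\mapsto\gamma^j(w)$, each a homomorphism because $\gamma$ is an automorphism of $F$; a tuple of homomorphisms into a direct power is again a homomorphism. As $\phi$ and $\theta$ are homomorphisms, so is $\phi\circ\theta$, and the identity $\phi\circ\theta=\psi$ follows once it is verified on each $x_k$.

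For the generator check I would compute $\phi(\theta(x_k))$ directly in the wreath product. By definition $\theta(x_k)=x_k x_{k+1}\cdots x_{k-1}$ (indices mod $q$) is the product of all $q$ generators, and each $\phi(x_m)$ carries the same root permutation $\sigma\colon j\mapsto j+1$; hence the permutation part of $\phi(\theta(x_k))$ is $\sigma^{q}=1$, which already matches the trivial permutation on the right-hand side. Among the $q$ factors exactly one equals $x_0$, namely the factor in position $-k\bmod q$, and it is the only one carrying a nontrivial decoration tuple, namely $(x_0,\dots,x_{q-1})$; all other factors are ``bare'' copies of $\sigma$. Multiplying the factors out, this single decoration tuple is cyclically transported by the root permutations of the neighbouring factors, and I would check that the resulting $a$-th coordinate is exactly the claimed $\gamma^{a}(x_k)$, so that $\phi(\theta(x_k))=\pair{x_k,\gamma(x_k),\dots,\gamma^{q-1}(x_k)}$.

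The main obstacle is precisely this last bookkeeping: keeping track, modulo $q$, of how the $q-1$ bare cyclic permutations rotate the one nontrivial decoration tuple into place, and confirming that it lands on the $\gamma$-twisted diagonal rather than on some cyclic shift of it. This is where the specific shape of the Thue-Morse substitution is used --- that $\theta(x_k)$ contains $x_0$ exactly once and in just the right position. As a way to lighten the computation I would note that $\theta$ commutes with $\gamma$ (immediate on generators, since $\theta(\gamma(x_i))=\theta(x_{i+1})=\gamma(\theta(x_i))$), so it would suffice to treat the case $k=0$ and then propagate to all $k$ using that $\phi$ intertwines $\gamma$ with the relabelling of the first level of the tree; however, establishing that compatibility requires the same index manipulation, so in practice I expect to carry out the direct computation above.
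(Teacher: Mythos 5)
Your proposal is correct and follows essentially the same route as the paper: the paper likewise reduces to generators, computes $\phi(\theta(x_i))=\pair{x_i,x_{i+1},\dots,x_{i-1}}=\pair{x_i,\gamma(x_i),\dots,\gamma^{q-1}(x_i)}$, and extends to all of $F$ by multiplicativity of both sides. The cyclic-shift bookkeeping you single out as the main obstacle is exactly the content of that one-line generator computation, which the paper simply asserts without spelling out the wreath-product conventions.
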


We conclude with some variants of the construction, and in particular
relations to iterated monodromy groups of rational functions in one
complex variable.

\section{The groups}
As sketched in the introduction, a \emph{self-similar group} is a
group $G$ endowed with a homomorphism $\phi\colon G\to G\wr_A\sym A$,
the \emph{decomposition}. The range of $\phi$ is the permutational
wreath product of $G$ with $A$; its elements may be represented as
permutations of $A$ with a decoration in $G$ on each strand. We write
$\phi(g)=\pair{g_0,\dots,g_{q-1}}\pi$.

Starting from the free group $F=\langle x_0,\dots,x_{q-1}\rangle$ and
the alphabet $A=\{a_0,\dots,a_{q-1}\}$, we define
$\phi\colon F\to F\wr_A\sym A$ by
\[\phi(x_0)=\pair{x_0,\dots,x_{q-1}}(j\mapsto j+1),\qquad \phi(x_i)=\pair{1,\dots,1}(j\mapsto j+1),\]
turning $F$ into a self-similar group. Write $K_0=1$ and
$K_{n+1}=\phi^{-1}(K_n^A)$; these form then an ascending sequence of
normal subgroups of $F$, and $G\coloneqq F/\bigcup_n K_n$ is again a
self-similar group, but now on which the map induced by $\phi$ is
injective. We christen the group $G$ just constructed the \emph{$q$th
  Thue-Morse group}. The decompositions may be written, using
permutations, as
\[\phi(x_0)=\begin{tikzpicture}[baseline=4mm]
    \draw[->] (0,1) -- node[pos=0.5] {\contour{white}{$x_0$}} (3.6,0);
    \draw[->] (0.6,1) -- node[pos=0.3] {\contour{white}{$x_1$}} (0,0);
    \foreach \i in {2,...,5} \draw[dotted,->] (0.6*\i,1) -- (0.6*\i-0.6,0);
    \draw[->] (3.6,1) -- node[pos=0.3] {\contour{white}{$x_{q-1}$}} (3.0,0);    
  \end{tikzpicture},\quad\phi(x_i)=\begin{tikzpicture}[baseline=4mm]
    \draw[->] (0,1) -- (3.6,0);
    \draw[->] (0.6,1) -- (0,0);
    \foreach \i in {2,...,5} \draw[dotted,->] (0.6*\i,1) -- (0.6*\i-0.6,0);
    \draw[->] (3.6,1) -- (3.0,0);    
  \end{tikzpicture}.
\]
Note that in the injective quotient $G_q$ the generators $x_1,\dots,x_{q-1}$ coincide and have order $q$. We thus have a presentation
\[G_q=\langle x_0,x_1\mid x_1^q,[(x_0x_1^{-1})^q,(x_1^{-1}x_0)^q],\dots\rangle,
\]
where producing an explicit presentation of the group is beyond our
current goals, but could be done following the lines
of~\cite{bartholdi:lpres}.

It is straightforward to prove Lemma~\ref{lem:tm}: for generator
$x_i$, we have
$\phi(\theta(x_i))=\pair{x_i,x_{i+1},\dots,x_{i-1}}=\pair{x_i,\gamma(x_i),\dots,\gamma^{q-1}(x_i)}$,
so
\[\phi(\theta(w))=\pair{w,\gamma(w),\dots,\gamma^{q-1}(w)}\text{ for all }w\in F.\]

A self-similar group $G$ is called \emph{contracting} if there exists
a finite subset $N\subseteq G$ with the following property: for every
$g\in G$ there exists $n\in\N$, such that if one iterates the
decomposition at least $n$ times on $g$ then all entries belong to
$N$. The minimal admissible such $N$ is called the \emph{nucleus}.
\begin{lem}
  The Thue-Morse group $G_q$ is contracting with
  $N=\{x_0^{\pm1},x_1^{\pm1}\}$.
\end{lem}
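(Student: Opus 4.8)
The plan is to exhibit $N=\{1,x_0^{\pm1},x_1^{\pm1}\}$ (listing, as is customary, the trivial state as well) as a \emph{state-closed} set into which every element contracts, and then to check minimality. First I would verify state-closure: reading off the decompositions and using that $x_1=\dots=x_{q-1}$ in $G_q$, the sections of $x_0$ are $x_0,x_1,\dots,x_{q-1}$, which collapse to $\{x_0,x_1\}$; from $\phi(x_0^{-1})=\pair{x_{q-1}^{-1},x_0^{-1},\dots,x_{q-2}^{-1}}(j\mapsto j-1)$ the sections of $x_0^{-1}$ collapse to $\{x_0^{-1},x_1^{-1}\}$; and the sections of $x_1^{\pm1}$ are trivial. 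Thus once the iterated decomposition of an element lands in $N$ it remains there, and it suffices to show that every $g$ has all sufficiently deep sections in $N$.

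The heart of the matter is a length estimate for the word metric $\lvert\cdot\rvert$ relative to $x_0^{\pm1},x_1^{\pm1}$. On the image of $\theta$ this is transparent from Lemma~\ref{lem:tm}: since $\lvert\theta(w)\rvert=q\,\lvert w\rvert$ while every section of $\theta(w)$ equals some $\gamma^j(w)$, of length $\lvert w\rvert$, the decomposition contracts length by the clean factor $1/q$. The partial splitting does not reach arbitrary elements, so I would argue directly. Every letter of a reduced word $g$ shifts the active coordinate by $\pm1$, and only the occurrences of $x_0^{\pm1}$ deposit a nontrivial letter into a given section; such a deposit is a genuine $x_0^{\pm1}$ exactly when the active coordinate is $0$ (for $x_0$) or $1$ (for $x_0^{-1}$), and is an $x_1^{\pm1}$ otherwise. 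Over the $q$ sections each occurrence of $x_0^{\pm1}$ is genuine for exactly one vertex, so a fraction $(q-1)/q$ of all deposits are $x_1^{\pm1}$'s; these occur in runs that collapse modulo $x_1^q=1$. Quantifying this yields a depth $d$, a constant $C$ and some $\lambda<1$ with $\max_v\lvert g|_v\rvert\le\lambda\,\lvert g\rvert+C$ over vertices $v$ of depth $d$.

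The delicate point, which I expect to be the main obstacle, is making $\lambda<1$ uniform, and in particular the borderline case $q=2$: the pure powers $x_0^\ell$ have level-one sections $x_0x_1x_0x_1\cdots$ in which no cancellation occurs, so the single-level ratio is only $1$; passing to depth $d=2$ restores a ratio $\tfrac12$, and the bookkeeping, tracking how the genuine deposits redistribute over two levels uniformly in $g$, must be carried out there. Granting the estimate, iteration drives all sufficiently deep sections into the finite ball $B=\{g:\lvert g\rvert\le C/(1-\lambda)+1\}$; a terminating finite computation then checks that each $g\in B$ has all sections beyond a bounded depth in $N$, which together with state-closure proves $G_q$ contracting with nucleus contained in $N$. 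Finally, minimality follows because each nontrivial element of $N$ occurs as a section arbitrarily deep: along the ray $0^k$ one has $x_0|_{0^k}=x_0$ and $x_0|_{0^k1}=x_1$, and symmetrically $x_0^{-1}|_{1^k}=x_0^{-1}$ and $x_0^{-1}|_{1^k0}=x_1^{-1}$; hence the nucleus is exactly $N$.
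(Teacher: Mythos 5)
Your overall strategy --- state-closure of $N\cup\{1\}$, a word-length contraction estimate $\max_a\lvert g_a\rvert\le\lambda\lvert g\rvert+C$ at bounded depth, a finite check on the residual ball, and a separate minimality argument --- is a legitimate route, and your preliminary observations (which deposits are ``genuine'' occurrences of $x_0^{\pm1}$, and the failure of single-level contraction for $q=2$ on the powers $x_0^m$) are correct. But there is a genuine gap exactly where you flag it: the uniform estimate with $\lambda<1$ is asserted, not proven. The difficulty is real, not just bookkeeping you chose to omit: the genuine deposits need not distribute evenly among the $q$ sections (for instance $\phi\bigl((x_0x_1^{-1})^m\bigr)$ is diagonal with one entry equal to $x_0^m$, so \emph{all} genuine deposits land in a single section), so the averaging heuristic ``a fraction $(q-1)/q$ of deposits are $x_1^{\pm1}$'s'' controls the \emph{sum} of the section lengths but not their \emph{maximum}; and for $q=2$ one must genuinely pass to depth $2$ and track how the runs of $x_1$'s created at level $1$ cancel at level $2$. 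Until that estimate is established, contraction is not proved. A secondary issue: the termination of your ``finite computation'' on the ball $B$ rests on the same unproven estimate, since you need the sections of elements of $B$ to remain in $B$ and eventually fall into $N$ rather than cycle outside it.

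The gap is avoidable, and the paper's (one-line) proof takes the simpler route: since $N\cup\{1\}$ is finite, state-closed and closed under inversion, it suffices to find one $n_0$ such that every product $gh$ with $g,h\in N$ has all its depth-$n_0$ sections in $N\cup\{1\}$; the usual induction on the $N$-length of a word (write $g=g'h$ with $h\in N$, contract $g'$ first, then apply the $N^2$ check to the resulting products of two nucleus elements) then gives contraction of every element with no global length estimate at all. Here $n_0=2$ works and the check is a finite computation: e.g.\ the sections of $x_0^2$ are $x_1x_0$, $x_0x_1$ and $x_1^2$ (using $x_1=\dots=x_{q-1}$ in $G_q$), each of which decomposes into elements of $N\cup\{1\}$ at the next level, and the remaining products $x_0^{\pm1}x_1^{\pm1}$, $x_1^{\pm1}x_0^{\pm1}$, $x_0^{-2}$, $x_1^{\pm2}$ are similar or easier. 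Your minimality argument --- the persistence of the sections $x_0$ at $0^k$, $x_1$ at $0^k1$, $x_0^{-1}$ at $1^k$ and $x_1^{-1}$ at $1^k0$ at every depth --- is correct, and is a point the paper leaves implicit.
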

\begin{proof}
  It suffices to check contraction on words in $N^2$, and this is direct.
\end{proof}

Let $G$ be a self-similar group, and consider an element $g\in
G$. Iterating $n$ times the map $\phi$ on $g$ yields a permutation of
$A^n$ decorated by $\#A^n$ elements. The element $g$ is called
\emph{bounded} if only a bounded number of these decorations are
non-trivial, independently of $n$. The group $G$ itself is called
\emph{bounded} if all its elements are bounded; by an easy argument,
it suffices to check this property on generators of $G$. It is
classical~\cite{bondarenko-n:pcf} that if $G$ is bounded and
finitely generated then it is contracting.

\subsection{Characters}
Recall that a character $\chi\colon G\to\C$ on a group is a function
that is normalized ($\chi(1)=1$), central ($\chi(g h)=\chi(h g)$ for
all $g,h\in G$) and positive semidefinite
($\sum_{i,j=1}^n\chi(g_i g_j^{-1})\lambda_i\overline{\lambda_j}\ge0$
for all $g_i\in G, \lambda_i\in\C$). A model example of character are
the ``fixed points'': if $G$ acts on a measure space $(X,\mu)$, set
$\chi(g)=\mu(\{x\in X:g(x)=x\})$. By the Gelfand-Naimark-Segal
construction, every character may be written as
$\chi(g)=\langle\xi,\pi(g)\xi\rangle$ for some unitary representation
$\pi\colon G\to \mathcal U(\mathscr H)$ and some unit vector
$\xi\in\mathscr H$.

Let now $G$ be self-similar, with decomposition
$\phi\colon G\to G\wr_A\sym A$. A character $\chi$ will be called
\emph{self-similar} if there exists a positive semidefinite kernel $k(\cdot,\cdot)\in\C^{A\times A}$ such that
\[(\#A)\chi(g)=\sum_{a\in A}k(a,\pi(a))\chi(g_a)\text{ whenever }\phi(g)=\pair{g_a}\pi.\]

\noindent We also note the following easy property of characters:
\begin{lem}
  If $G$ is a contracting, self-similar group, then every self-similar
  character on $G$ is determined by its values on the nucleus. If
  moreover $G$ is bounded and finitely generated, then every
  self-similar character on $G$ is determined by the kernel $k$.
\end{lem}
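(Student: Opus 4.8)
The plan is to read the defining identity $(\#A)\chi(g)=\sum_{a\in A}k(a,\pi(a))\chi(g_a)$ as a linear recursion on the values of $\chi$, indexed by the sections $g_a$ of $g$, and to run that recursion down to the nucleus. It is convenient to set $(Tf)(g)=\frac1{\#A}\sum_a k(a,\pi(a))f(g_a)$ for $f\colon G\to\C$, so that a self-similar character with kernel $k$ is exactly a normalised, central, positive-semidefinite solution of $\chi=T\chi$. Throughout I will use the a priori bound $|\chi(g)|\le\chi(1)=1$, valid for any character, which guarantees that all sums below converge absolutely and that the only solutions of interest are bounded.

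For the first assertion I would iterate the fixed-point equation and invoke contraction: for each $g$ there is an $n$ with all level-$n$ sections $g_v$, $v\in A^n$, lying in $N$, and unwinding $\chi=T^n\chi$ gives
\[(\#A)^n\chi(g)=\sum_{v\in A^n}k^{(n)}_v\,\chi(g_v),\qquad g_v\in N,\]
where $k^{(n)}_v$ is the product of the kernel entries read along the path $v$ through the iterated permutations. Hence $\chi(g)$ is a finite $\C$-linear combination of the finitely many numbers $\{\chi(s):s\in N\}$. To see that these already carry the full information, note that only entries of $k$ lying on the ``permutation diagonals'' $k(a,\pi(a))$ ever occur in such reductions; specialising $\chi=T\chi$ to $s\in N$ --- whose sections $s_a$ again lie in $N$ by minimality of the nucleus --- expresses precisely these relevant combinations of $k$ through the values $\chi(s_a)$. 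Thus every quantity on the right-hand side above is a function of $\chi|_N$, and $\chi$ is determined by its restriction to the nucleus.

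The second assertion is where boundedness must pin down even the values $\chi|_N$ from $k$ alone, and this is the step I expect to require real care. Here I would use the structure theory of bounded automorphisms: in a finitely generated bounded group the nucleus consists of \emph{finitary} states, all of whose iterated sections eventually become trivial, together with finitely many \emph{directed} states, each having a single section that is again directed while the remaining sections are finitary or trivial. On the finitary part one determines $\chi$ by induction on depth, the base case being $\chi(1)=1$: for finitary $s$ the equation $\#A\,\chi(s)=\sum_a k(a,\pi(a))\chi(s_a)$ has all $s_a$ finitary of strictly smaller depth, hence already expressed through $k$. On the directed states the same equation closes up into a finite linear system $\xi=M\xi+c$, where $\xi=(\chi(s))_{s\text{ directed}}$, the vector $c=c(k)$ gathers the finitary contributions already known in terms of $k$, and $M$ carries in row $s$ the single entry $k(a_0,\pi(a_0))/\#A$ recording the unique directed section $s_{a_0}$.

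The crux is to show that $I-M$ is invertible, so that $\xi=(I-M)^{-1}c$ is forced and visibly a function of $k$. Since $M$ has one nonzero entry per row, its determinant factors over the directed cycles, and invertibility amounts to checking that the product of the weights $k(a_0,\pi(a_0))/\#A$ around each cycle is not $1$. I would obtain this from positive semidefiniteness of $k$ together with $\sum_a k(a,a)=\#A$ (the case $g=1$ of the identity): these give $|k(a,b)|\le\sqrt{k(a,a)k(b,b)}$, so whenever the permutation moves $a_0$ --- as it always does for $G_q$, where $\pi$ is the fixed-point-free cycle $j\mapsto j+1$ --- one has $|k(a_0,\pi(a_0))|\le\tfrac12\#A$, each cycle weight has modulus $<1$, and $I-M$ is automatically invertible. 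The one genuine obstacle is the degenerate ``resonant'' possibility that a directed cycle runs through fixed points of the permutations carrying extremal diagonal kernel values; I expect positivity of the character (via $|\chi|\le1$, which forces $\xi=M\xi$ to have only the zero bounded solution) to rule this out, but it is the single point that must be argued rather than merely computed.
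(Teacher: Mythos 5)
Your argument follows the paper's proof essentially verbatim in structure: reduce $\chi(g)$ to the nucleus by iterating the self-similarity relation under contraction, then split the nucleus into finitary states $N_0$ (handled by induction on depth, hence determined by $k$) and directed states $N_1$ (handled by a finite linear system with one off-diagonal entry per row). The one point you leave open --- non-degeneracy of $I-M$ in the ``resonant'' case where a directed cycle passes through fixed points of the permutations carrying extremal diagonal kernel values --- is exactly the point the paper asserts without proof (``this linear system is non-degenerate''), and your bound $|k(a,\pi(a))|\le\sqrt{k(a,a)k(\pi(a),\pi(a))}\le\tfrac12\#A$ for $a\ne\pi(a)$ actually supplies more justification than the paper does, so you are not missing anything the paper contains.
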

\begin{proof}
  For each element $g\in G$, write the linear relation imposed on
  $\chi(g)$ by self-similarity of the character $\chi$. Substituting
  sufficiently many times, $\chi(g)$ may be expressed in terms of
  $\chi\upharpoonright N$.

  If $G$ is bounded, then furthermore the nucleus may be decomposed as
  $N=N_0\sqcup N_1$ with the property that for every $g\in N_0$, all
  decorations of $g$ are eventually trivial, while if $g\in N_1$, then
  a single decoration $g'$ of $g$ is in $N_1$ and all the others are
  in $N_0$. Clearly $\chi\upharpoonright N_0$ is determined by $k$,
  while for $g\in N_1$ we obtain a linear relation
  $\chi(g)=\chi(g')/\#A+C_g$ with $C_g$ depending only on $k$; this
  linear system is non-degenerate, yielding a unique solution for
  $\chi\upharpoonright N_1$.
\end{proof}
  
Let us check that $G_q$ is bounded. For the generators
$x_1,\dots,x_{q-1}$ this is obvious, since all their decorations are
trivial starting from level $n=1$. Then $x_0$ has a single decoration
which is $x_0$ itself on top of the $x_1,\dots,x_{q-1}$, so in fact
for all $n\in\N$ there are at most $q$ non-trivial decorations in the
$n$-fold decomposition of $x_0$.

Note that every self-similar group acts on a $\#A$-regular rooted
tree, as follows. The group fixes the empty sequence $\varepsilon$. To
determine the action of $g\in G$ on a word $v=v_1 v_2\dots v_n$,
compute $\phi(g)=\pair{g_a}\pi$; then define recursively
$g(v)=\pi(v_1)\,g_{v_1}(v_2\dots v_n)$.

This action extends naturally to the boundary of the rooted tree,
which is identified with the space of infinite sequences
$A^\infty$. This space comes naturally equipped with the Bernoulli
measure $\mu$, assigning mass $1/\#A$ to each of the elementary
cylinders $C_{i,a}=\{v\in A^\infty:v_i=a\}$, and $G$ acts by
measure-preserving transformations. It is easy to see that the
constant kernel ($k(a,b)=1/\#A$ for all $a,b$) induces the trivial
self-similar character $\chi(g)\equiv1$, and that the identity kernel
($k(a,b)=\delta_{a=b}$) induces the fixed-point self-similar character
$\chi(g)=\mu\{v\in A^\infty:g(v)=v\}$.

Recall that every self-similar group $G$ admits an \emph{injective
  quotient}, on which the decomposition $\phi$ induces an injection
$G\hookrightarrow G\wr_A\sym A$. The group $G$ also admits a
\emph{faithful quotient}, defined as the quotient of $G$ by the kernel
of the natural map to $\sym{A^\infty}$ given by the action defined
above; it is the largest self-similar quotient of $G$ that acts
faithfully on $A^\infty$. Clearly the faithful quotient is a quotient
of the injective quotient, but they need not coincide.

It is easy to see that, for $G_q$, the injective and faithful
quotients coincide, using the contraction property and the fact that
the action on $A^\infty$ is faithful on the nucleus.

\section{The algebras}
We fix once and for all a commutative ring $\Bbbk$. We are
particularly interested in the example $\Bbbk=\F[q]$.

As in the case of groups, we start by considering the free associative
(tensor) algebra $T=\Bbbk\langle x_0,\dots,x_{q-1}\rangle$, and define
$\phi\colon T\to M_q(T)$ by
\[\phi(x_0)=\begin{pmatrix} 0 & \cdots & 0 & x_{q-1}\\
    x_0 & \ddots & \ddots & 0\\
    \vdots & \ddots & \ddots & \vdots\\
    0 & \cdots & x_{q-2} & 0
  \end{pmatrix},\qquad
  \phi(x_i)=\begin{pmatrix} 0 & \cdots & 0 & 1\\
    1 & \ddots & \ddots & 0\\
    \vdots & \ddots & \ddots & \vdots\\
    0 & \cdots & 1 & 0
  \end{pmatrix}.
\]
Write $J_0=0$ and $J_{n+1}=\phi^{-1}(M_q(J_n))$; these form then an
ascending sequence of ideals in $T$, and $\A_q\coloneqq T/\bigcup_n J_n$
is a self-similar algebra, on which the map induced by $\phi$ is
injective.

The construction of $\A_q$ from $G_q$ should be transparent: both
algebraic objects have the same generating set, and if
$\phi(g)=\pair{g_a}\pi$ in $G_q$, then the decomposition $\phi(g)$ in
$\A_q$ is a monomial matrix with permutation $\pi$ and non-zero
entries $g_a$.

It may be convenient to extend $\A_q$ into a *-algebra, namely an
algebra $\B_q$ equipped with an anti-involution $x\mapsto x^*$. This
may easily be done by extending $T$ to $\Bbbk F$, the group ring of
$F$, and extending the decomposition by
\[\phi(x_0^{-1})=\begin{pmatrix}0 & x_0^{-1} & \cdots & 0\\
    \vdots & \ddots & \ddots & \vdots\\
    0 & \ddots & \ddots & x_{q-2}^{-1}\\
    x_{q-1}^{-1} & 0 & \cdots & 0
  \end{pmatrix},\qquad\phi(x_i^{-1})=\begin{pmatrix}0 & 1 & \cdots & 0\\
    \vdots & \ddots & \ddots & \vdots\\
    0 & \ddots & \ddots & 1\\
    1 & 0 & \cdots & 0
  \end{pmatrix}.
\]
We then have a natural group homomorphism $G_q\to\B_q^\times$ given by
$x_i\mapsto x_i$ on the generating set. In particular, $\B_q$ is a
quotient of the group ring $\Bbbk G_q$. A presentation of $\B_q$ begins as
\[\B_q=\langle x_0^{\pm1},x_1\mid x_1^q-1,(x_0 x_1^{-1})^q-1)(x_1^{-1}x_0)^q-1),\dots\rangle;\]
we see in particular that $\B_q$ is a proper quotient of $\Bbbk G_q$,
since in $\Bbbk G_q$ the elements $(x_0 x_1^{-1})^q-1$ and
$(x_1^{-1}x_0)^q-1$ commute while in $\B_q$ their product vanishes,
being a product of two matrices each with a single non-zero entry. As
in the case of groups, a presentation of $\A_q$ and of $\B_q$ could be
computed following the techniques in~\cite{bartholdi:branchalgebras},
but this is beyond our purposes.

We naturally extend the Thue-Morse endomorphism $\theta$ to $T$; and
note then, similarly to Lemma~\ref{lem:tm}, the easy
\begin{lem}
  We have
  \[\phi(\theta(w))=\begin{pmatrix}w & 0 & \cdots & 0\\
      0 & \gamma(w) & \cdots & 0\\
      \vdots & 0 & \ddots & \vdots\\
      0 & 0 & \cdots & \gamma^{q-1}(w),
    \end{pmatrix}
  \]
  where $\gamma$ is the endomorphism of $T$ permuting cyclically the generators $x_i\mapsto x_{i+1\bmod q}$.\qed
\end{lem}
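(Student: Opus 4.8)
The plan is to exploit that everything in sight is multiplicative and that $T$ is free, so that the identity need only be checked on the generators $x_0,\dots,x_{q-1}$. Write $D\colon T\to M_q(T)$ for the map $w\mapsto\operatorname{diag}(w,\gamma(w),\dots,\gamma^{q-1}(w))$ appearing on the right-hand side. Since $\gamma$ is an algebra endomorphism of $T$, each coordinate $w\mapsto\gamma^k(w)$ is an algebra homomorphism, and hence $D$ is an algebra homomorphism into the diagonal subalgebra of $M_q(T)$: indeed $D(w)D(w')=\operatorname{diag}\bigl(\gamma^k(w)\gamma^k(w')\bigr)_k=\operatorname{diag}\bigl(\gamma^k(ww')\bigr)_k=D(ww')$ and $D(1)=I$. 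Likewise $\theta$ is an algebra endomorphism of $T$ and $\phi$ is an algebra homomorphism, so $w\mapsto\phi(\theta(w))$ is an algebra homomorphism $T\to M_q(T)$ as well. Two algebra homomorphisms out of the free algebra $T$ that agree on the generating set coincide everywhere; so it suffices to prove $\phi(\theta(x_i))=D(x_i)$ for each $i$.

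For the generator check I would compute directly. By definition of the Thue--Morse substitution, $\theta(x_i)=x_i x_{i+1}\cdots x_{q-1}x_0\cdots x_{i-1}$ is a word of length $q$ in which every generator occurs exactly once, so $\phi(\theta(x_i))$ is a product of exactly $q$ of the monomial matrices $\phi(x_0),\dots,\phi(x_{q-1})$. The essential structural observation is that all of these matrices realize one and the same permutation, the full $q$-cycle $\pi\colon j\mapsto j+1$; hence their product realizes $\pi^q=\mathrm{id}$ and is therefore diagonal, matching the shape of $D(x_i)$. It then remains only to read off the diagonal entries, which amounts to following the single column of decorations carried by the unique factor $\phi(x_0)$ through the intervening plain permutation matrices.

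This last bookkeeping is the only place where care is needed, and it is exactly where the parallel with the group statement pays off: the decomposition of $\A_q$ sends a decorated permutation $\pair{g_a}\pi$ to the corresponding monomial matrix and respects products, so for a monomial $w$ the matrix $\phi(\theta(w))$ is literally the monomial matrix attached to the element $\phi(\theta(w))=\pair{w,\gamma(w),\dots,\gamma^{q-1}(w)}$ computed in Lemma~\ref{lem:tm}. Since that decomposition already has trivial permutation and diagonal decorations $\gamma^k(w)$, the diagonal entries emerge in the claimed order $w,\gamma(w),\dots,\gamma^{q-1}(w)$ with no further work. Thus the generator identity holds, and by the homomorphism argument of the first paragraph it propagates to all $w\in T$. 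I expect the indexing of the cyclic shift --- ensuring the $k$-th diagonal entry is $\gamma^k(w)$ and not $\gamma^{-k}(w)$ --- to be the only genuine subtlety, and it is settled once and for all by the generator computation.
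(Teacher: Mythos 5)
Your proposal is correct and follows essentially the same route as the paper, which states this lemma with no proof beyond ``similarly to Lemma~\ref{lem:tm}'': reduce to generators using that $\phi\circ\theta$ and $w\mapsto\operatorname{diag}(w,\gamma(w),\dots,\gamma^{q-1}(w))$ are both algebra homomorphisms out of the free algebra $T$, then observe that $\phi(\theta(x_i))$ is the monomial matrix of the decorated permutation $\pair{x_i,\gamma(x_i),\dots,\gamma^{q-1}(x_i)}$ computed in Lemma~\ref{lem:tm}, hence diagonal with the claimed entries. The indexing subtlety you flag is real but lies in the paper's own conventions (the displayed matrices index decorations by column, while the wreath-product computation of Lemma~\ref{lem:tm} implicitly uses the row-indexed, i.e.\ transposed, convention --- a literal product of the displayed matrices yields the cyclically shifted diagonal $\operatorname{diag}(\gamma^{-1}(w),w,\dots,\gamma^{q-2}(w))$), so your deduction via Lemma~\ref{lem:tm} is the right way to land on the stated formula.
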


A self-similar algebra $\A$ is called \emph{contracting} if there exists a
finite-rank submodule $N\le\A$ with the following property: for every
$s\in\A$ there exists $n\in\N$, such that iterating the decomposition
at least $n$ times on $s$ gives a matrix with all entries in $N$. The
minimal admissible such $N$ is called the \emph{nucleus}.

\begin{lem}
  The Thue-Morse algebras $\A_q$ and $\B_q$ are contracting, with
  respective nuclei $\Bbbk\{x_0,x_1\}$ and
  $\Bbbk\{x_0^{\pm1},x_1^{\pm1}\}$.
\end{lem}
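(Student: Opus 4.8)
The plan is to establish contraction directly, reducing first to monomials and then controlling a single integer statistic of each matrix entry. Since $\phi$ is an algebra homomorphism and the proposed nucleus is a submodule, it suffices to contract a single monomial $w$: an arbitrary element is a finite $\Bbbk$-combination $\sum_i\lambda_i w_i$, and once $\phi$ has been iterated $n$ times, with $n$ large enough that every $\phi^n(w_i)$ lies in $M_{q^n}(N)$ (possible, and stable under further iteration, since $\phi(N)\subseteq M_q(N)$), linearity puts $\phi^n(\sum_i\lambda_i w_i)$ there too. In $\A_q$ the generators $x_1,\dots,x_{q-1}$ already coincide, so $w$ is a word in $x_0,x_1$; in $\B_q$ every element is a $\Bbbk$-combination of words in $x_0^{\pm1},x_1^{\pm1}$.

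The key observation is that only $x_0$ carries non-trivial decorations: each $\phi(x_i)$ with $i\ge1$ is the bare cyclic permutation matrix, with entries $0,1$. Hence, writing $w=y_1\cdots y_\ell$ and following the unique non-zero path through the product of monomial matrices $\phi(w)=\phi(y_1)\cdots\phi(y_\ell)$, every entry of $\phi(w)$ is a word whose letters come exactly from the positions $p$ with $y_p=x_0$; in particular its length equals the number $c(w)$ of $x_0$'s in $w$. Since every factor induces the same shift $j\mapsto j+1$, the letter contributed at such a $p$ on the path starting in column $j_0$ is $x_{(j_0+p-1)\bmod q}$, which reduces to $x_0$ exactly when $p\equiv1-j_0\pmod q$. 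Thus the column-$j_0$ entry of $\phi(w)$ is itself a word in $x_0,x_1$ of length $c(w)$ whose own $x_0$-count equals the size of the residue class $1-j_0$ of the set of $x_0$-positions of $w$.

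I would then follow the statistic $m_n=\max_e c(e)$, the largest number of $x_0$'s occurring in an entry $e$ of $\phi^n(w)$. By the two facts above, every entry at level $n+1$ has length equal to the $x_0$-count of its parent, so as soon as $m_n\le1$ the entries one level further down have length $\le1$ and hence lie in the $\Bbbk$-span of $\{1,x_0,x_1\}$ --- the asserted nucleus, the unit being the ever-present, customarily unlisted, state. It remains to show $m_n$ reaches $1$, and here is the crux. A step can fail to lower the weight only through an entry all of whose $x_0$-positions lie in one residue class mod $q$; the decisive point is that such a ``monochromatic'' entry is sent to an entry whose $x_0$-positions are the ranks $1,2,\dots,c$, i.e.\ consecutive integers, which are spread over all $q$ residue classes, so the following step drops the weight to at most $\lceil c/q\rceil$. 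Making this precise gives $m_{n+2}\le m_n-1$ whenever $m_n\ge2$ --- every maximal-weight entry at level $n+1$ must descend from a monochromatic parent and is therefore consecutive --- so $m_n$ falls to $1$ in finitely many steps. I expect this two-step strict-decrease estimate, and the bookkeeping behind it, to be the only real obstacle; it is the combinatorial shadow of the length-division in Lemma~\ref{lem:tm}, where $\phi$ undoes the $q$-fold length increase produced by $\theta$.

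Finally I would record minimality and transfer to $\B_q$. No proper submodule suffices, because $\phi^n(x_0)$ displays both $x_0$ and $x_1$ among its entries for every $n$ --- the constant path collects $x_0$, a shifted path collects $x_1$ --- and these are linearly independent in $\A_q$. The algebra $\B_q$ is treated verbatim: its generators $x_i^{\pm1}$ with $i\ge1$ act as bare (inverse) permutation matrices, only $x_0^{\pm1}$ decorates, and the identical weight count --- now tracking $x_0^{\pm1}$-positions along the walk $j\mapsto j\pm1$ on $\Z/q$ --- contracts every element into the $\Bbbk$-span of $\{1,x_0^{\pm1},x_1^{\pm1}\}$, any reduction in the group only shortening the entries.
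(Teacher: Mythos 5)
Your argument is correct, but it is a genuinely different (and much more explicit) proof than the paper's. The paper disposes of the lemma in one line: it verifies contraction on monomials in $N^2$, i.e.\ checks that each product of two nucleus generators decomposes, after finitely many iterations of $\phi$, into nucleus entries, and then invokes the standard induction on word length. You instead run a global descent on an arbitrary monomial $w$: you observe that all generators share the underlying cyclic shift, so each entry of $\phi(w)$ is a word of length $c(w)$ (the $x_0$-count of $w$) whose own $x_0$-count is the number of $x_0$-positions of $w$ in a single residue class mod $q$; the dichotomy ``either the weight strictly drops, or the entry is monochromatic and its heavy child is $x_0^c$, whose children have weight $\le\lceil c/q\rceil$'' then gives $m_{n+2}\le m_n-1$, and the reduction from general elements to monomials by linearity is sound. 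What your route buys is an explicit contraction-depth bound (linear in the $x_0$-count) and a visible mechanism --- the residue-class bookkeeping really is the combinatorial shadow of Lemma~\ref{lem:tm}, as you say --- at the cost of length; the paper's route buys brevity by outsourcing the induction to the standard $N^2$ criterion. Two small points. First, you are right that the nucleus must contain $1$ (the paper's statement $\Bbbk\{x_0,x_1\}$ tacitly omits it), and your minimality check via the persistent $x_0$ and $x_1$ entries of $\phi^n(x_0)$ is fine. Second, the transfer to $\B_q$ is not quite ``verbatim'': once inverses are allowed the underlying permutations are shifts by $\pm1$, so the positions where a child collects an $x_0^{\pm1}$ are governed by a $\pm1$-walk on $\Z/q$ rather than by an arithmetic progression; the monochromatic case then produces a pure word in $x_0^{\pm1}$ whose children have weight at most $\lceil c/2\rceil$ (a non-lazy walk cannot sit at $0$ twice in a row) instead of $\lceil c/q\rceil$. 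The two-step strict decrease survives, so the conclusion stands, but that adjustment should be spelled out rather than asserted.
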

\begin{proof}
  It suffices to check contraction on monomials in $N^2$, and this is direct.
\end{proof}

Let $\A$ be a self-similar algebra, and consider an element
$x\in\A$. Iterating $n$ times the map $\phi$ on $x$ yields an
$A^n\times A^n$-matrix with entries in $\A$. The element $x$ is called
\emph{row-bounded} if only a bounded number of entries are non-trivial
on each row of that matrix, independently of $n$ and the row; and is
called \emph{column-bounded} if the same property holds for
columns. The algebra $\A$ itself is called \emph{bounded} if all its
elements are bounded. Evidently, the product of row-bounded elements
in row-bounded, and the same holds for column-bounded elements; so it
suffices, to prove that $\A$ is bounded, to check that property on its
generators. The same argument as in the case of groups shows that
row-bounded or column-bounded self-similar algebras are contracting.

It is again easy to see that the algebras $\A_q$ and $\B_q$ are
bounded. This will play a major role in the computations below.

\subsection{Characters}\label{ss:Achar}
We begin by introducing some concepts. A \emph{character} on $\Bbbk$
is a semigroup homomorphism $\chi\colon(\Bbbk,\cdot)\to\C$ satisfying
$\chi(1)=1$ and $\chi(0)=0$. Recall that the group of units in $\F[q]$
is cyclic; so may be embedded in $\C^\times$ by mapping a generator to
a primitive $(q-1)$th root of unity. The trivial character, mapping
all non-zero elements to $1$, is also a valid choice.

By \emph{characters} we think of extensions to a group ring $\Bbbk G$
of Brauer characters, rather than algebra homomorphisms. For our
purposes, the following definition suffices:
\begin{defn}
  A \emph{character} on a $\Bbbk$-self-similar algebra $\A$ is a map
  $\chi\colon\A\to\C$ satisfying, for some character $\chi_0$ on $\Bbbk$,
  \begin{enumerate}
  \item $\chi(1)=1$;
  \item $\chi(\lambda s)=\chi_0(\lambda) \chi(s)$ for all $\lambda\in\Bbbk,s\in\A$;
  \item $\chi(x^* x)\ge0$ for all $x\in\A$, if $\A$ is a *-algebra.
  \end{enumerate}
\end{defn}
Note in particular that we do not require $\chi(x y)=\chi(x)\chi(y)$
(this holds only for ``linear characters'') nor
$\chi(x+y)=\chi(x)+\chi(y)$ (this would be meaningless if $\Bbbk$ has
positive characteristic), and we also do not require
$\chi(x y)=\chi(y x)$ (this holds only for ``diagonalizable
elements'').

A character $\chi$ on $\A$ is called \emph{self-similar} if there is a
character $\chi_0$ on $\Bbbk$ and a positive semidefinite kernel $k(\cdot,\cdot)\in\C^{q\times q}$ such that
\[q\cdot \chi(s)=\sum_{i,j=0}^q k(i,j) \chi(\phi(s)_{i,j}).\]

We also note the following easy property of characters:
\begin{lem}\label{lem:algdet}
  If $\A$ is a contracting, self-similar algebra, then every
  self-similar character on $\A$ is determined by its values on the
  nucleus. If moreover $\A$ is row- or column-bounded, then every
  self-similar character on $\A$ is determined by the kernel $k$.\qed
\end{lem}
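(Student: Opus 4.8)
The plan is to follow the proof of the corresponding statement for groups, reading the matrix analogue $q\,\chi(s)=\sum_{i,j}k(i,j)\,\chi(\phi(s)_{ij})$ of the defining relation in place of $(\#A)\chi(g)=\sum_a k(a,\pi(a))\chi(g_a)$, and using throughout the homogeneity $\chi(\lambda s)=\chi_0(\lambda)\chi(s)$ to reduce every question to finitely many ``pure'' values.

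For the first assertion, fix $s\in\A$ and substitute the self-similarity relation into itself $n$ times. Since the $(u,v)$-entry of the $n$-fold decomposition $\phi^n(s)$ (for $u,v\in A^n$) is an iterated entry of $\phi(s)$, contraction guarantees that for $n$ large all of these lie in the nucleus $N$. The iterated relation then reads $q^n\chi(s)=\sum_{u,v\in A^n}\big(\prod k\big)\,\chi(\phi^n(s)_{uv})$, a fixed $\C$-linear combination, with coefficients built from $k$, of the values of $\chi$ on $N$. Hence two self-similar characters sharing the same $k$ and $\chi_0$ and agreeing on $N$ agree on all of $\A$.

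For the second assertion assume, without loss of generality, that $\A$ is row-bounded. Row-boundedness forces the recursion graph on $N$ --- with an edge $s\to t$ whenever $t$ occurs, up to a unit scalar, as an entry of $\phi(s)$ --- to have ``active'' out-degree one, exactly as for bounded automata; this splits a $\Bbbk$-basis of $N$ as $N_0\sqcup N_1$ so that every iterated entry of an element of $N_0$ is eventually a scalar multiple of $1$, while for $s\in N_1$ the matrix $\phi(s)$ has a single entry $s'\in N_1$, at a position $(i_0,j_0)$, and all remaining entries in $N_0$. For $s\in N_0$ the relation, iterated, expresses $\chi(s)$ through values $\chi(\lambda\cdot1)=\chi_0(\lambda)$, so $\chi\!\upharpoonright\! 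N_0$ is an explicit function of $k$ and $\chi_0$. For $s\in N_1$ we get $\chi(s)=\tfrac{1}{q}k(i_0,j_0)\,\chi(s')+C_s$, with $C_s$ collecting the $N_0$-entries and thus depending only on $k,\chi_0$. This is a finite linear system for $\chi\!\upharpoonright\! N_1$ governed by the weighted shift $T$ along the functional active graph, with weights $k(i_0,j_0)/q$; it has a unique solution precisely when $\mathrm{Id}-T$ is invertible.

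The crux, and the step I expect to be the main obstacle, is this non-degeneracy. Applying the relation to $s=1$, where $\phi(1)=I$ and $\chi(1)=1$, gives the normalization $\mathrm{tr}(k)=\sum_i k(i,i)=q$. Together with positive semidefiniteness, which yields $|k(i,j)|\le\sqrt{k(i,i)k(j,j)}$ and hence $|k(i,j)|\le q/2$ whenever $i\ne j$, this forces the product of the weights around any cycle of the active graph to have modulus $<1$: an off-diagonal step contributes a factor of modulus $\le1/2$, and the only configuration in which a cyclic product of diagonal weights attains $1$ is the degenerate rank-one kernel $k=q\,e_m e_m^{*}$ resting on a diagonal self-loop, which cannot occur for $\A_q$ or $\B_q$ because the decomposition permutation is the fixed-point-free cyclic shift and all recursive entries are off-diagonal. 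Consequently the spectral radius of $T$ is strictly below $1$, $\mathrm{Id}-T$ is invertible, and $\chi\!\upharpoonright\! N_1$ --- hence, by the first two paragraphs, $\chi$ itself --- is uniquely determined by $k$.
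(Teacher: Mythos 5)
Your argument is the one the paper intends: the lemma is stated with a \qed, deferring to the proof of its group analogue, and your two paragraphs reproduce exactly that scheme --- iterate the self-similarity relation until all entries of $\phi^n(s)$ fall into the nucleus (first assertion), then split $N=N_0\sqcup N_1$ using boundedness and solve a finite linear system for $\chi\upharpoonright N_1$ (second assertion). You go further than the paper in two useful ways: you record the normalization $\sum_i k(i,i)=q$ obtained from $s=1$, and you actually address the non-degeneracy of the linear system, which the paper merely asserts with the phrase ``this linear system is non-degenerate.'' Your cycle-product estimate (off-diagonal weights of modulus at most $1/2$ by Cauchy--Schwarz and the trace normalization, diagonal weights at most $1$ with equality only for the rank-one kernel $q\,e_me_m^*$) is correct and is genuinely the crux.

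The one real weakness is in how you dispose of the degenerate case. The lemma is stated for an arbitrary contracting, row- or column-bounded self-similar algebra, but you exclude the kernel $k=q\,e_me_m^*$ sitting on a diagonal active cycle by appealing to the specific decomposition of $\A_q$ and $\B_q$ (fixed-point-free cyclic shift, hence no diagonal recursive entries). In the stated generality that configuration can occur --- it requires only that the nucleus contain a cycle all of whose active entries sit at a fixed diagonal position $(m,m)$, as happens for instance in Grigorchuk-type recursions where nontrivial decorations ride on fixed points of the permutation --- and then the relation for that cycle degenerates to a tautology and $\chi\upharpoonright N_1$ acquires a free parameter. So either the exclusion must be added as a hypothesis, or the argument should be presented as proving the lemma for the algebras actually used in the paper (which is all the paper needs). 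A second, smaller point: since $\chi$ is neither additive nor multiplicative, ``determined by its values on the nucleus'' means on the whole finite-rank module $N$, not on a $\Bbbk$-basis of it; homogeneity $\chi(\lambda s)=\chi_0(\lambda)\chi(s)$ does not by itself reduce $\chi(\lambda x_0+\mu x_1)$ to $\chi(x_0)$ and $\chi(x_1)$, so the ``finite linear system'' in your second paragraph should be indexed by orbit representatives of $N$ under scalars rather than by basis vectors. Neither issue is resolved in the paper either, so your proposal is, if anything, more honest about where the difficulty lies.
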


We concentrate on two specific characters, which are both
self-similar, with trivial character
$\chi_0(\lambda)=1-\delta_{\lambda=0}$, and determined (via
Lemma~\ref{lem:algdet}) respectively by the kernels
$k(i,j)=\delta_{i=j}$ and $k(i,j)\equiv1$. We denote the first
character by $\chi_f$ since it measures in some sense the fixed points
of an element, and the second one by $\chi_s$ since it measures in
some sense the ``spread'' of an element. For ease of reference, the
``spread'' character is characterized by
\[q\cdot\chi_s(\lambda s)=\sum_{i,j=0}^q \chi_s(\phi(s)_{i,j})\text{ for all }\lambda\in\Bbbk^\times.\]

\subsection{The ``spread'' character}

We embark in the proof of Theorem~\ref{thm:main}, which will occupy this whole subsection.

The ``spread'' character is in fact tightly connected to the
boundedness property of $\A$. In the case of $\A_q$, or more generally
self-similar algebras whose generators decompose as monomial matrices,
the recursion formula of $\chi_s$ implies $\chi_s(x_0)=\chi_s(x_1)=1$,
and in fact in $\B_q$ we have $\chi_s(x)=1$ for any monomial
$x\in G_q$.

It follows that $\chi_s$ may be related to the growth of languages in $(A\times A)^*$: for each $x\in\A$, set
\[L_x=\{(u,v)\in A^k\times A^k\mid \phi^k(x)_{u,v}\in\Bbbk^\times\cup\Bbbk^\times x_0\cup\Bbbk^\times x_1\}.\]
 
\begin{lem}
  For all $x\in\A$, the language $L_x$ is related to the ``spread''
  character $\chi_s(x)$ as follows: there is a constant $C$ such that
  \[\#((A\times A)^k\cap L_x)=q^k \chi_s(x)-C\text{ for all $k$ large enough}.
  \]
\end{lem}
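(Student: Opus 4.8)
The plan is to unwind the self-similar recursion for $\chi_s$ at level $k$ and match it against a count of nonzero entries in $\phi^k(x)$. Since $\chi_s$ is self-similar with the all-ones kernel $k(i,j)\equiv1$, iterating the defining recursion $q\cdot\chi_s(s)=\sum_{i,j}\chi_s(\phi(s)_{i,j})$ exactly $k$ times gives
\[q^k\chi_s(x)=\sum_{(u,v)\in A^k\times A^k}\chi_s\bigl(\phi^k(x)_{u,v}\bigr).\]
The entries $\phi^k(x)_{u,v}$ live in $\A$, and the sum only picks up contributions from nonzero entries, since $\chi_s(0)=0$. So the whole problem reduces to understanding, for large $k$, the value $\chi_s$ takes on the individual entries $\phi^k(x)_{u,v}$.

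First I would invoke the boundedness/contraction of $\A_q$ together with the remark just made in the text that $\chi_s(x)=1$ for any monomial $x\in G_q$, and more precisely that $\chi_s(x_0)=\chi_s(x_1)=1$. The nucleus is $\Bbbk\{x_0,x_1\}$, so by contraction, for $k$ large every nonzero entry $\phi^k(x)_{u,v}$ lies in the nucleus, i.e.\ is a scalar multiple of $1$, $x_0$, or $x_1$ (the generators, not longer words). On such entries $\chi_s$ takes the value $1$: indeed $\chi_s(\lambda\cdot1)=\chi_0(\lambda)=1$ for $\lambda\in\Bbbk^\times$ by property (2) of characters and the choice of trivial $\chi_0$, and likewise $\chi_s(\lambda x_0)=\chi_s(\lambda x_1)=1$. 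Thus each entry in $L_x$ contributes exactly $1$ to the sum, and every other nonzero entry contributes $0$ once it has been pushed into the nucleus. This is precisely the definition of $L_x$: the pairs $(u,v)$ with $\phi^k(x)_{u,v}\in\Bbbk^\times\cup\Bbbk^\times x_0\cup\Bbbk^\times x_1$. Hence for $k$ large the sum collapses to $\#\bigl((A\times A)^k\cap L_x\bigr)$, up to the finitely many entries not yet in the nucleus.

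The constant $C$ accounts exactly for this transient discrepancy: at each level there may be a bounded number of entries not yet contracted into the nucleus, but whose $\chi_s$-values (evaluated via the recursion) do not equal their membership-indicator in $L_x$. Because $\A_q$ is row- and column-bounded, the number of nonzero entries grows like $q^k$ with a bounded per-row count, and the contribution of the non-nucleus entries stabilizes: each such entry $y$, being itself bounded, contributes a fixed deficit $\chi_s(y)-[\,y\in\Bbbk^\times\cup\Bbbk^\times x_0\cup\Bbbk^\times x_1\,]$, and boundedness forces the total of these deficits to be eventually constant in $k$. Rearranging $q^k\chi_s(x)=\#\bigl((A\times A)^k\cap L_x\bigr)+C$ gives the claimed formula.

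The main obstacle will be making precise the claim that the transient deficit is \emph{eventually constant} rather than merely bounded. One must show that the set of non-nucleus nonzero entries, together with their $\chi_s$-values, eventually enters a periodic (in fact stationary) regime under refinement by $\phi$ — this is where boundedness is genuinely used, since it guarantees that along each row the non-contracted part is a bounded ``wavefront'' whose combinatorial type stabilizes. I expect to argue that for $k$ beyond the contraction threshold, refining $\phi$ once more moves each borderline entry to a new borderline entry of the same $\chi_s$-value while introducing nucleus entries in a uniform pattern, so that the difference $q^k\chi_s(x)-\#\bigl((A\times A)^k\cap L_x\bigr)$ is independent of $k$; the remaining verification is a direct finite computation on the nucleus and its immediate preimages under $\phi$.
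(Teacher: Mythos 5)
Your proposal follows essentially the same route as the paper's proof: iterate the self-similarity recursion to obtain $q^k\chi_s(x)=\sum_{(u,v)}\chi_s\bigl(\phi^k(x)_{u,v}\bigr)$, then invoke the (refined) contraction property so that for $k$ large every entry is a scalar multiple of $1$, $x_0$ or $x_1$, on which $\chi_s$ coincides with the indicator of membership in $L_x$. The only divergence is your final paragraph: the ``stabilizing wavefront'' of non-contracted entries you worry about is vacuous, since contraction by definition forces \emph{all} entries of $\phi^k(x)$ into the nucleus once $k$ exceeds the contraction threshold, so no separate stabilization argument for $C$ is needed (the argument as written gives $C=0$ for such $k$); the point that genuinely needs care --- and which the paper itself only gestures at as a ``slight refinement'' --- is that the entries eventually lie in the \emph{union} $\Bbbk\cup\Bbbk x_0\cup\Bbbk x_1$ rather than merely in the spanning module $\Bbbk\{x_0,x_1\}$, since on a linear combination such as $1-x_0$ the value $\chi_s=2$ does not equal the $L_x$-indicator.
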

\begin{proof}
  This follows from a slight refinement of the contraction property:
  in fact, for every $x\in\A$, if one iterates sufficiently many times
  $\phi$ on $x$ then the resulting matrix (of size $q^k\times q^k$)
  has entries in $\Bbbk\cup\Bbbk x_0\cup\Bbbk x_1$, and the language
  $L_x$ counts those entries that are not trivial. On the other hand,
  the ``spread'' character also counts (up to normalizing by a factor
  $q^k$) the number of non-trivial entries. From then on, increasing
  $k$ multiplies the number of words in $L_x$ by $q$ so the
  relationship between the growth of $L_x$ and $\chi_s(x)$ remains the
  same.
\end{proof}

Note that we could have considered a large number of different other
languages: counting the number of entries $(u,v)\in A^k\times A^k$
such that the $(u,v)$-coefficient of $\phi^k(x)$ is, at choice,
\begin{itemize}
\item a scalar in $\A$;
\item a non-zero element in $\A$;
\item an element not in the augmentation ideal $\langle x_i-1\rangle$ of $\A$;
\item a monomial in $\A$;
\item an invertible element of $\A$;
\item a unitary element of $\A$.
\end{itemize}
All these choices would yield essentially equivalent languages, with
comparable growth.

\begin{lem}\label{lem:infinitesimal}
  For all integers $k\ge1$, the ``spread'' character satisfies
  \[\chi_s(1-x_0^{q^k})=2/q^{k-1},\qquad\chi_s(1-\gamma^i(x_0\cdots x_{q-1})^{q^k})=2/q^k.\]
\end{lem}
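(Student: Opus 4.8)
The plan is to compute each value directly from the self-similarity recursion for $\chi_s$, exploiting the diagonal splitting provided by the Thue-Morse lemma.

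\medskip

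\noindent First I would set up the recursion for $\chi_s(1-x_0^{q^k})$. The key observation is that the Thue-Morse lemma gives, for any $w\in F$, that $\phi(\theta(w))$ is the diagonal matrix $\operatorname{diag}(w,\gamma(w),\dots,\gamma^{q-1}(w))$; so if I can exhibit a suitable power of $x_0$ as $\theta$ of a simpler element, the decomposition becomes diagonal and the recursion collapses. Concretely, I would first understand $\phi(x_0^q)$. Since $\phi(x_0)$ is the monomial matrix with cyclic permutation $(j\mapsto j+1)$ and entries $x_0,x_1,\dots,x_{q-1}$ down the subdiagonal, its $q$th power $\phi(x_0^q)=\phi(x_0)^q$ is the diagonal matrix whose entries are the $q$ cyclic products of $x_0,\dots,x_{q-1}$, i.e.\ $\operatorname{diag}\bigl((x_0x_1\cdots x_{q-1}),\gamma(x_0\cdots x_{q-1}),\dots,\gamma^{q-1}(x_0\cdots x_{q-1})\bigr)$ up to cyclic reindexing. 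Comparing this with the Thue-Morse lemma applied to $w=x_0\cdots x_{q-1}$ (or rather identifying $\theta(x_0)=x_0x_1\cdots x_{q-1}$ on the nose) identifies the two displayed quantities: $x_0^{q^k}$ and $\gamma^i(x_0\cdots x_{q-1})^{q^{k}}$ are governed by the \emph{same} diagonal recursion, which is exactly why the lemma pairs them.

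\medskip

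\noindent The core computation is then a single recursive step. Writing $c=x_0\cdots x_{q-1}$, I claim $\phi(x_0^{q})=\operatorname{diag}(c,\gamma(c),\dots,\gamma^{q-1}(c))$ (after the cyclic bookkeeping), so that $\phi(x_0^{q^k})$ is diagonal with entries $\gamma^i(c)^{q^{k-1}}$. Feeding $s=1-x_0^{q^k}$ into the defining recursion $q\cdot\chi_s(s)=\sum_{i,j}k(i,j)\chi_s(\phi(s)_{i,j})$ with the spread kernel $k\equiv1$: since $\phi(1-x_0^{q^k})=I-\operatorname{diag}(\gamma^i(c)^{q^{k-1}})$ is diagonal, only the diagonal terms survive and I get $q\cdot\chi_s(1-x_0^{q^k})=\sum_{i=0}^{q-1}\chi_s\bigl(1-\gamma^i(c)^{q^{k-1}}\bigr)$. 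Since $\gamma$ permutes generators and $\chi_s$ is manifestly $\gamma$-invariant (the kernel $k\equiv1$ and the decomposition are equivariant under cyclically relabelling), all $q$ summands equal $\chi_s(1-c^{q^{k-1}})$, giving the clean recursion $q\cdot\chi_s(1-x_0^{q^k})=q\cdot\chi_s(1-c^{q^{k-1}})$. An entirely parallel computation starting from $\phi(c^{q})$ yields the companion relation tying $\chi_s(1-c^{q^k})$ to $\chi_s(1-x_0^{q^k})$ down one level, and the two interlock into a recursion that divides each value by $q$ at each step. The base case $k=1$ (or $k=0$) is pinned down by the normalization $\chi_s(1)=1$ together with $\chi_s(x_0)=\chi_s(x_1)=1$ noted above, and unwinding the geometric recursion produces the factors $2/q^{k-1}$ and $2/q^{k}$; the numerator $2$ arises because at the bottom of the recursion two generators ($x_0$ and one of the $x_i$, $i\ge1$) contribute non-trivially while the difference $1-(\cdots)$ contributes with the expected sign.

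\medskip

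\noindent The step I expect to be the main obstacle is the careful bookkeeping of the cyclic indices and the exact identification of which power of $x_0$ becomes genuinely diagonal under $\phi$: one must track that $\phi(x_0)^q$ is diagonal (not merely that some conjugate is), verify that the diagonal entries are precisely the $\gamma$-orbit of $c=x_0\cdots x_{q-1}$ in the correct order, and confirm the $\gamma$-invariance of $\chi_s$ rigorously rather than by appeal to symmetry. A secondary subtlety is justifying that the two interlocking recursions are genuinely non-degenerate and that the constant emerging at the base case is exactly $2$ and not some other small integer; this requires pinning the base values $\chi_s(1-x_0^{q})$ and $\chi_s(1-c^{q})$ by hand, for which the boundedness of $\A_q$ and the explicit action on the nucleus $\{x_0,x_1\}$ (via Lemma~\ref{lem:algdet}) furnish the needed initial data.
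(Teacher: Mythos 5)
Your proposal follows the same route as the paper: make $\phi(x_0^{q^k})$ and $\phi((x_0\cdots x_{q-1})^{q^k})$ diagonal (via $\phi(x_0)^q$ and the Thue--Morse splitting $\phi(\theta(w))=\operatorname{diag}(w,\gamma(w),\dots,\gamma^{q-1}(w))$), feed $1$ minus these elements into the spread recursion, and let the two interlocking relations contract by a factor of $q$ per step from a base value of $2$. The skeleton is right, but two points need repair, and one of them is the actual engine of the lemma.

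First, the companion relation reads
\[
q\,\chi_s\bigl(1-(x_0\cdots x_{q-1})^{q^k}\bigr)=\chi_s\bigl(1-x_0^{q^k}\bigr)+\sum_{i=1}^{q-1}\chi_s\bigl(1-x_i^{q^k}\bigr),
\]
and the factor $1/q$ you need appears only because every term of the sum vanishes: in the injective quotient one has $x_i^q=1$ for $i\ge1$ (the decomposition of $x_i$ is a permutation matrix of order $q$), so $1-x_i^{q^k}=0$ for $k\ge1$ and $\chi_s(0)=0$. You never invoke this relation; without it the combined recursion does not divide by $q$ at each step, so the claimed geometric decay has no justification. Second, your explanation of the numerator $2$ (``two generators contribute non-trivially at the bottom'') is not where it comes from. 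One computes $\chi_s(1-x_i)=2$ for \emph{every} generator, because $I-\phi(x_i)$ has $q$ diagonal entries equal to $1$ and $q$ off-diagonal entries equal to $-x_j$ or $-1$, each of spread $1$, whence $q\,\chi_s(1-x_i)=2q$; then $\chi_s(1-x_0\cdots x_{q-1})=\frac1q\sum_i\chi_s(1-x_i)=2$ seeds the induction. These are exactly the explicit base-case computations the paper carries out before starting the recursion, and your proof is not complete without them.
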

\begin{proof}
  We compute recursively some values of $\chi_s$. First,
  $\chi_s(x_1)=1$ since $\phi(x_1)$ is a permutation matrix. Then
  $\chi_s(x_0)=1$ since self-similarity of $\chi_s$ yields
  $q\chi_s(x_0)=\chi_s(x_0)+q-1$.  We next note
  $\chi_s(1-x_0)=\chi_s(1-x_1)=2$; indeed self-similarity yields
  $q\chi_s(x_0)=2q=q\chi_s(x_1)$.

  Next,
  $\phi(x_0^q)=\pair{x_0\cdots x_{q-1},x_1\cdots
    x_{q-1}x_0,\dots,x_{q-1}x_0\cdots x_{q-2}}$, and
  $\phi(x_0\cdots x_{q-1})=\pair{x_0,\dots,x_{q-1}}$ and similarly for
  its cyclic permutations; so self-similarity yields
  \[q\chi_s(1-\gamma^i(x_0\cdots x_{q-1}))=2q,\quad q\chi_s(1-x_0^q)=2q\]
  so $\chi_s(1-\gamma^i(x_0\cdots x_{q-1}))=\chi_s(1-x_0^q)=2$.

  This is the beginning of induction: for $k\ge1$, the matrix
  $\phi(x_0^{q^{k+1}})$ is diagonal, with diagonal entries
  $\gamma^i(x_0\cdots x_{q-1})^{q^k}$, and
  $\phi(\gamma^i(x_0\cdots x_{q-1})^{q^k})$ is also diagonal, with
  diagonal entries $x_0^{q^k},\dots,x_{q-1}^{q^k}$; so self-similarity
  yields
  \begin{align*}
    q\chi_s(1-x_0^{q^{k+1}})&=\sum_{i=0}^{q-1}\chi_s(1-\gamma^i(x_0\cdots
                              x_{q-1})^{q^k}),\\
    q\chi_s(1-(x_0\cdots x_{q-1})^{q^k})&=\chi_s(1-x_0^{q^k})+q(q-1)\chi_s(1-x_1^{q^k}).
  \end{align*}
  Now $x_1^q=1$ so the last term vanishes because $k\ge1$, and we get
  $\chi_s(1-x_0^{q^{k+1}})=\chi_s(1-\gamma^i(x_0\cdots
  x_{q-1})^{q^k})=\chi_s(1-x_0^{q^k})/q$.
\end{proof}

\noindent Consider next the map
$\sigma\colon T\times\cdots\times T\to T$ given by
\[\sigma(s_0,\dots,s_{q-1})=\theta(s_0)+x_1\theta(s_1)+\cdots+x_1^{q-1}\theta(s_{q-1}).\]
Recalling that $\gamma$ is the automorphism of $T$ permuting cyclically all generators, we get
\[\phi(\sigma(s_0,\dots,s_{q-1}))=\begin{pmatrix}
    s_0 & \gamma(s_{q-1}) & \cdots & \gamma^{q-1}(s_1)\\
    s_1 & \gamma(s_0) & \cdots & \gamma^{q-1}(s_2)\\
    \vdots & \vdots & \ddots & \vdots\\
    s_{q-1} & \gamma(s_{q-2}) & \cdots & \gamma^{q-1}(s_0)
  \end{pmatrix}.
\]

\noindent We are ready to prove Theorem~\ref{thm:main}. Define subsets
$\Omega_n$ of $T$ by
\begin{gather*}
  \Omega_0 = \{0,1-\gamma^i(x_0\cdots x_{q-1})^{q^k}\text{ for all }i,k\},\\
  \Omega_{n+1} = \bigcup_{i=0}^{q-1}\gamma^i\sigma(\Omega_n^q)
\end{gather*}
and finally $\Omega=\bigcup_{n\ge0}\Omega_n$.

\begin{lem}
  For all $x\in\Omega$ and all $i$ the matrix $\phi(x)$ is diagonal
  and $\chi_s(s)=\chi_s(\gamma^i(x))$.
\end{lem}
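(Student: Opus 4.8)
The plan is to induct on the least $n$ with $x\in\Omega_n$, establishing the $\gamma$-invariance of $\chi_s$ together with the structural statement about $\phi(x)$ in a single induction, since the step for the character identity feeds on structural control of $\phi$.

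For the base case $x\in\Omega_0$ I would dispose of $x=0$ immediately and treat $x=1-\gamma^i(x_0\cdots x_{q-1})^{q^k}$ by quoting the computation inside Lemma~\ref{lem:infinitesimal}: there each power $\gamma^i(x_0\cdots x_{q-1})^{q^k}$ is shown to decompose diagonally (for $k=0$ as $\phi(\theta(x_i))$, and for $k\ge1$ as the displayed diagonal matrix), so $\phi(x)=\mathrm{Id}-(\text{diagonal})$ is diagonal. The same lemma gives $\chi_s(x)=2/q^{k}$, a value independent of $i$; since $\gamma^j(x)=1-\gamma^{i+j}(x_0\cdots x_{q-1})^{q^k}$ is again a member of $\Omega_0$ carrying the same value, the identity $\chi_s(\gamma^j(x))=\chi_s(x)$ follows.

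For the inductive step write $x=\gamma^j\sigma(s_0,\dots,s_{q-1})$ with all $s_m\in\Omega_n$. The first thing to record is the \emph{additive law} $\chi_s(\sigma(s_0,\dots,s_{q-1}))=\sum_{m}\chi_s(s_m)$: by the displayed formula the $(a,b)$-entry of $\phi(\sigma(s_0,\dots,s_{q-1}))$ is $\gamma^{b}(s_{a-b\bmod q})$, a $\gamma$-translate of a member of $\Omega_n$, so plugging into the spread recursion $q\,\chi_s(s)=\sum_{a,b}\chi_s(\phi(s)_{a,b})$ (kernel $k\equiv1$, every entry counts) and using the inductive invariance $\chi_s\circ\gamma=\chi_s$ on $\Omega_n$ collapses the double sum to $q\sum_m\chi_s(s_m)$. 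To compare $x$ with $\gamma^i(x)$ I would isolate the entrywise intertwining $\phi(\gamma(\sigma(s_0,\dots,s_{q-1})))=\gamma_\ast\,\phi(\sigma(s_0,\dots,s_{q-1}))$, valid whenever the cyclic shift $\gamma$ does not carry the index past $x_0$; this rests on $\theta\gamma=\gamma\theta$, on the diagonal shape $\phi(\theta(w))=\mathrm{diag}(w,\gamma(w),\dots,\gamma^{q-1}(w))$, and on $\phi(x_\ell)$ being the \emph{same} bare permutation matrix for every $\ell\in\{1,\dots,q-1\}$. Iterating the intertwining and re-running the collapse shows $\chi_s(\gamma^i\sigma(s_0,\dots,s_{q-1}))=\sum_m\chi_s(s_m)$ for every $i$ other than the exceptional wrap-around value (treated below), so that once that case is in hand $\chi_s$ is constant along the $\gamma$-orbit and $\chi_s(x)=\chi_s(\gamma^i(x))$.

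The step I expect to be the real obstacle is the assertion that $\phi(x)$ is diagonal beyond level $0$: the very formula for $\phi(\sigma(s_0,\dots,s_{q-1}))$ carries off-diagonal entries $\gamma^{b}(s_{a-b\bmod q})$ with $a\ne b$, so diagonality is not formal and would require showing that these entries die in $\A_q$, i.e.\ land in the defining ideals $J_\bullet$. I would try to force this using the relations $x_\ell=x_1$ and $x_1^q=1$ together with contraction of $\A_q$ onto the nucleus $\Bbbk\{x_0,x_1\}$; failing a clean vanishing, the fallback is to observe that only the weaker $\gamma$-equivariance $\phi(\gamma^i x)=\gamma^i_\ast\phi(x)$ is actually needed for the character identity, and to prove the lemma in that form. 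Entangled with this is the one genuinely special index $i=q-1$, where $\gamma^{q-1}$ replaces the bare permutation by the decorated matrix $\phi(x_0)$ and the intertwining above fails; there I would recover $\chi_s(\gamma^{q-1}x)=\chi_s(x)$ by evaluating the spread directly on $\sum_\ell x_0^{\ell}\,\theta(\gamma^{q-1}s_\ell)$ and checking by hand — as one does when $q=2$, where it reduces to the identity $\chi_s(x_1 s)+\chi_s(x_0\gamma(s))=2\chi_s(s)$ for $s\in\Omega_n$ — that the extra decorations still contribute exactly $\sum_m\chi_s(s_m)$.
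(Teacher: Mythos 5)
The paper states this lemma with no proof at all, so there is nothing to compare your argument against line by line; your reconstruction by induction on the level $n$ with $x\in\Omega_n$ is the natural route, and your base case and your additive collapse of the double sum over the entries $\gamma^b(s_{a-b\bmod q})$ are exactly what the subsequent Lemma~\ref{lem:additive} consumes. You are also right to flag the ``diagonal'' clause as the real problem: taken literally it is false for $x\in\Omega_n$ with $n\ge1$, since $\phi(\sigma(s_0,\dots,s_{q-1}))$ has the displayed non-zero off-diagonal entries $\gamma^b(s_{a-b})$ as soon as two of the $s_m$ are non-zero, and these do not vanish in $\A_q$ (they have non-zero spread). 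The statement (which also reads $\chi_s(s)$ where it must mean $\chi_s(x)$) can only be salvaged as asserting diagonality on $\Omega_0$ together with the $\gamma$-invariance $\chi_s(x)=\chi_s(\gamma^i(x))$ on all of $\Omega$; the latter is all that Lemma~\ref{lem:additive} actually uses, so your ``fallback'' is the correct target, not a retreat.

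The one genuine gap that remains in your plan is the wrap-around case, and you have correctly isolated it without closing it. For $1\le i\le q-2$ the identity $\gamma^i\sigma(s_0,\dots,s_{q-1})=\sigma(\gamma^i s_0,\dots,\gamma^i s_{q-1})$ holds in $\A_q$ because $x_{1+i}=x_1$ there and $\theta\gamma=\gamma\theta$, so the induction closes. For $i=q-1$ you must evaluate $\chi_s$ on $\sum_\ell x_0^\ell\,\theta(\gamma^{q-1}s_\ell)$, whose decomposition has entries of the form $x_{c+\ell-1}\cdots x_c\,\gamma^{c-1}(s_\ell)$, and reducing their contribution to $\sum_m\chi_s(s_m)$ requires the invariance $\chi_s(u\,t)=\chi_s(t)$ for monomials $u$ and $t\in\Omega$. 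The paper asserts only the special case $t=1$ (``$\chi_s(x)=1$ for any monomial $x\in G_q$'') and never proves the general statement; it does not follow formally from the defining recursion of $\chi_s$, but needs its own induction (multiplication by a monomial matrix permutes rows and multiplies entries by monomials, so the count of non-trivial entries at each level of the decomposition is unchanged). Until that auxiliary invariance is established, your treatment of $i=q-1$ --- and hence the full $\gamma$-orbit invariance, which is the load-bearing part of the lemma --- remains a sketch rather than a proof.
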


\begin{lem}\label{lem:additive}
  For all $s_0,\dots,s_{q-1}\in\Omega$ we have
  \[\chi_s(\sigma(s_0,\dots,s_{q-1}))=\chi_s(s_0)+\cdots+\chi_s(s_{q-1}).\]
\end{lem}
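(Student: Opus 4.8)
The plan is to feed $\sigma(s_0,\dots,s_{q-1})$ into the defining recursion of the spread character and then exploit the $\gamma$-invariance established in the preceding lemma. Recall that self-similarity of $\chi_s$ (the case $k\equiv1$) reads $q\,\chi_s(s)=\sum_{i,j=0}^{q-1}\chi_s(\phi(s)_{i,j})$. Taking $s=\sigma(s_0,\dots,s_{q-1})$ and reading off the matrix of $\phi(\sigma(s_0,\dots,s_{q-1}))$ displayed just above, I would record that its $(i,j)$ entry is $\gamma^j(s_{i-j})$, with indices taken modulo $q$. Substituting turns the recursion into
\[q\,\chi_s(\sigma(s_0,\dots,s_{q-1}))=\sum_{i,j=0}^{q-1}\chi_s\bigl(\gamma^j(s_{i-j})\bigr).\]

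First I would treat the sum over $i$ for each fixed column index $j$. As $i$ ranges over $\Z/q$, the subscript $i-j$ also ranges over all of $\Z/q$, so after reindexing $m=i-j$ the inner sum becomes $\sum_{m=0}^{q-1}\chi_s(\gamma^j(s_m))$. Next I would invoke the preceding lemma: since each $s_m$ lies in $\Omega$, we have $\chi_s(\gamma^j(s_m))=\chi_s(s_m)$ for every $j$, whence the inner sum equals $\sum_{m=0}^{q-1}\chi_s(s_m)$, a quantity that no longer depends on $j$. Summing these $j$-independent inner sums over the $q$ columns produces a factor of $q$, giving $q\,\chi_s(\sigma(s_0,\dots,s_{q-1}))=q\sum_{m=0}^{q-1}\chi_s(s_m)$, and dividing by $q$ yields the claim.

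I do not expect a genuine obstacle: the argument is a bookkeeping computation once its two inputs—the explicit matrix of $\phi\circ\sigma$ and the $\gamma$-invariance of $\chi_s$ on $\Omega$—are in place. The only point that demands care is the reindexing. One must observe that the column index $j$ enters uniformly as the outer automorphism $\gamma^j$ across an entire column, while the row index merely permutes which argument $s_m$ appears, so that the invariance can be applied column-by-column before the $i$-sum collapses. Crucially, the invariance is applied to the elements $s_m\in\Omega$ themselves, not to their images $\gamma^j(s_m)$, so no closure property of $\Omega$ under $\gamma$ beyond the preceding lemma is required; and since the matrix identity for $\phi\circ\sigma$ holds already in $T$, it descends to $\A_q$ and the computation is legitimate there.
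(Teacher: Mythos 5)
Your proof is correct and follows exactly the route the paper intends: apply the self-similarity recursion $q\,\chi_s(s)=\sum_{i,j}\chi_s(\phi(s)_{i,j})$ to the displayed matrix of $\phi(\sigma(s_0,\dots,s_{q-1}))$, whose $(i,j)$ entry is $\gamma^j(s_{i-j})$, and collapse the double sum using the $\gamma$-invariance of $\chi_s$ on $\Omega$ from the preceding lemma. The paper's own proof is just a one-line appeal to these same two ingredients, so your write-up is the same argument carried out explicitly.
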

\begin{proof}
  This follows directly from the form of
  $\phi(\sigma(s_0,\dots,s_{q-1}))$ given above, and the fact that
  $\chi_s$ is $\gamma$-invariant on $\Omega$.
\end{proof}

\begin{proof}[Proof of Theorem~\ref{thm:main}]
  Since $\A_q$ is contracting, every element $s\in\A$ decomposes in
  finitely many steps into elements of the nucleus; and $\chi_s$ takes
  values in $\Z[1/q]\cap\R_+$ on the nucleus; so $\chi_s(\A)$ is
  contained in $\Z[1/q]\cap\R_+$.

  On the other hand, by Lemma~\ref{lem:infinitesimal} the values of
  $\chi_s$ include all $2/q^k$, and Lemma~\ref{lem:additive} its
  values form a semigroup under addition. It follows (considering
  separately $q$ even and $q$ odd) that all fractions of the form
  $i/q^k$ with $i,k\ge0$ are in the range of $\chi_s$.
\end{proof}

\section{Variants}
Essentially the same methods apply to numerous other examples; we have
concentrated, here, on the one with the closest connection to the
Thue-Morse sequence.

Here is another example we considered: write the alphabet
$A=\{a_0,\dots,a_{q-1}\}$, and define $\phi\colon F\to F\wr_A\sym A$ by
\[\phi(x_0)=\pair{x_0,\dots,x_{q-1}}(a_i\mapsto a_{i-1\bmod q}),\qquad \phi(x_i)=\pair{1,\dots,1}(a_0\leftrightarrow a_i),\]
or in terms of matrices
\[\phi(x_0)=\begin{pmatrix} 0 & x_1 & 0 & \cdots & 0\\
    0 & 0 & x_2 & \cdots & \vdots\\
    \vdots & \vdots & \ddots & \ddots & 0\\
    0 & \vdots & \ddots & \ddots & x_{q-1}\\
    x_0 & 0 & \cdots & \cdots & 0
  \end{pmatrix},\qquad
  \phi(x_i)=\begin{pmatrix} 0 & \cdots & 1 & \cdots & 0\\
    \vdots & 1 & \vdots & \cdots & \vdots\\
    1 & \cdots & 0 & \ddots & 0\\
    \vdots & \vdots & \ddots & \ddots & \vdots\\
    0 & \cdots & 0 & \cdots & 1
  \end{pmatrix}.
\]

If furthermore one applies the automorphism that inverts every
generator (noting that the $x_i$ are involutions for $i\ge1$), we may
define an injective self-similar group $H_q$, isomorphic to the above, by
\[\phi(x_0)=\pair{x_0^{-1},\dots,x_{q-1}^{-1}}(a_i\mapsto a_{i-1\bmod q}),\qquad \phi(x_i)=\pair{1,\dots,1}(a_0\leftrightarrow a_i).\]

We now note that $H_q$ is a contracting ``iterated monodromy
group''. As such, it possesses a limit space --- a topological space
equipped with an expanding self-covering, whose iterated monodromy
group is isomorphic to $H_q$. Note that $H_2$ and $G_2$ are
isomorphic. It is tempting to try to ``read'' the Thue-Morse sequence,
and in particular the Thue-Morse word, within the dynamics of the
self-covering map.

\subsection{Iterated monodromy groups}
Let $f$ be a rational function, seen as a self-map of
$\mathbb P^1(\C)$, and write $P=\{f^n(z):n\ge1,f'(z)=0\}$ the
\emph{post-critical set} of $f$. For simplicity, assume that $P$ is
finite. Choose a basepoint $*\in\mathbb P^1(\C)\setminus P$, and write
$F=\pi_1(\mathbb P^1(\C)\setminus P,*)$, a free group of rank $\#P-1$.

The choice of a family of paths
$\lambda_x\colon[0,1]\to\mathbb P^1(\C)\setminus P$ from $*$ to
$x\in f^{-1}(*)$ for all choices of $x$ naturally leads to a
self-similar structure on $F$, following~\cite{nekrashevych:ssg}: the
decomposition of $\gamma\in F$ has as permutation the monodromy action
of $F$ on $f^{-1}(*)$, and the $\deg(f)$ elements of $F$ are all
$\lambda_x\#f^{-1}(\gamma)\#\lambda_{\gamma\cdot x}^{-1}$, with $\#$
denoting concatenation of paths. The faithful quotient of $F$ is
called the \emph{iterated monodromy group} of $G$.

\begin{prop}
  The Thue-Morse group $H_q$ is the iterated monodromy group of a
  degree-$q$ branched covering of the sphere.
\end{prop}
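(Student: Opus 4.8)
The plan is to run the construction of the previous subsection backwards: to exhibit a degree-$q$ branched self-covering $f$ of $\mathbb P^1(\C)$ whose iterated monodromy group, the faithful quotient of $F=\pi_1(\mathbb P^1(\C)\setminus P,*)$, is exactly $H_q$. Following Nekrashevych~\cite{nekrashevych:ssg}, a contracting self-similar group generated by a \emph{planar kneading automaton} is precisely the iterated monodromy group of a post-critically finite branched self-covering of the sphere; since $H_q$ is already known to be contracting, the whole task reduces to realizing its defining recursion as such an automaton of degree $q$. First I would read off the branching data from the permutation parts of $\phi$: the generator $x_0$ carries the $q$-cycle $a_i\mapsto a_{i-1\bmod q}$, and each $x_i$ with $i\ge1$ carries the transposition $(a_0\leftrightarrow a_i)$. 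A $q$-cycle is the monodromy of a totally ramified preimage (local degree $q$) and a transposition that of a simple critical point (local degree $2$), so $f$ must have one critical point of order $q$ and $q-1$ simple critical points.

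Next I would confirm that this data lives on the sphere. The Riemann--Hurwitz defect is $(q-1)+\sum_{i=1}^{q-1}(2-1)=2q-2$, exactly the value that forces genus $0$ for a degree-$q$ self-map of $\mathbb P^1(\C)$. Moreover the product of the $q$ monodromies, taken in a suitable cyclic order, is the identity: this reduces to the fact that the product of transpositions $(a_0\,a_1)(a_0\,a_2)\cdots(a_0\,a_{q-1})$ is a single $q$-cycle, inverse to the cycle $a_i\mapsto a_{i-1\bmod q}$ carried by $x_0$. Since these permutations also generate a transitive group (already $x_0$ is a $q$-cycle), the Riemann existence theorem provides a genuine degree-$q$ branched covering of the sphere, branched over a $q$-point set $P=\{p_0,\dots,p_{q-1}\}$ with $p_0$ totally ramified and $p_1,\dots,p_{q-1}$ simply ramified; here $F$ is identified with $\pi_1(\mathbb P^1(\C)\setminus P,*)$ and $A$ with the fibre $f^{-1}(*)$.

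The substance of the proof is to promote this single covering to a \emph{self}-covering whose iterated monodromy structure reproduces the full recursion, decorations included, and not merely the permutations. Here I would use the restriction data to recover the dynamics $f|_P$. The decoration $x_0|_{a_0}=x_0$ is the adding-machine signature of a totally ramified fixed point, so $p_0$ plays the r\^ole of $\infty$ for a polynomial; the vanishing of all decorations of each $x_i$ ($i\ge1$) says that the simple critical point over $p_i$ and the remaining regular preimages of $p_i$ avoid $P$; and the decorations $x_0|_{a_i}=x_i$ record that lifting the loop about $p_0$ through the fibre sweeps, in cyclic planar order, past $p_1,\dots,p_{q-1}$. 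Because $H_q$ is contracting, unfolding the recursion terminates and pins down $f$ on all of $P$ as a finite forward-invariant set. Choosing connecting paths $\lambda_{a_j}$ adapted to this planar configuration and computing the self-similar structure directly from $\lambda_x\#f^{-1}(\gamma)\#\lambda_{\gamma\cdot x}^{-1}$ then returns the recursion defining the variant example; $H_q$ is its image under the generator-inverting automorphism from which it was defined, and inverting generators does not affect the property of being an iterated monodromy group. As the iterated monodromy group is by definition the faithful quotient of $F$ under this action, and $H_q$ is the faithful quotient of the same $F$ under the same recursion, the two coincide.

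The main obstacle is precisely the passage from permutations to decorations, i.e.\ verifying that the automaton of $H_q$ is a \emph{planar} kneading automaton. Two points demand care. First, one must order the finite post-critical points $p_1,\dots,p_{q-1}$ around $p_0$, and choose the paths $\lambda_{a_j}$, so that lifting $x_0$ yields the tuple $\pair{x_0,\dots,x_{q-1}}$ on the nose rather than a permuted or conjugated variant; this orientation bookkeeping is exactly the planarity condition that keeps the covering on the sphere rather than on a surface of higher genus. Second, one must check that the forward invariance $f(P)\subseteq P$ extracted from the recursion is consistent with the trivial decorations of the $x_i$ and with the cycle structure above; since contraction already guarantees that $P$ is finite, this is the finite verification embodied in the kneading-automaton axioms. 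No analytic input is needed for the statement, a topological branched covering being all that is claimed; promoting $f$ to a genuine rational map would instead require checking the absence of a Thurston obstruction.
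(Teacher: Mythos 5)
Your overall strategy --- read off the branching data from the permutation parts, check Riemann--Hurwitz and the product-one condition, and then invoke the theory of (planar) kneading automata / sphere bisets to realize the recursion topologically --- is in the spirit of the paper, which likewise appeals to the general theory of Bartholdi--Dudko before writing down an explicit model. But there is a concrete error that breaks the argument: you identify the post-critical set $P$ with the set of $q$ critical values $\{p_0,\dots,p_{q-1}\}$ and then declare $F=\pi_1(\mathbb P^1(\C)\setminus P,*)$. That is impossible: $\pi_1$ of a $q$-punctured sphere is free of rank $q-1$, while $F=\langle x_0,\dots,x_{q-1}\rangle$ has rank $q$; equivalently, the peripheral loops of a $q$-punctured sphere satisfy $g_{p_{i_1}}\cdots g_{p_{i_q}}=1$ in some cyclic order, whereas no product of all the generators of $H_q$ is trivial (such a product has trivial permutation part but every decoration equal to a single nontrivial generator, so it survives in the injective quotient). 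The post-critical set is strictly larger than the branch locus here, because the critical values are not fixed: it has $q+1$ points.

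The same problem surfaces in your dynamical portrait, which is internally inconsistent. If $p_0$ were a totally ramified \emph{fixed} point then $f^{-1}(p_0)=\{p_0\}$; but the decorations $x_0|_{a_i}=x_i$ mean precisely that the lift of the loop about $p_0$ based at $a_i$ encircles the point $p_i$, forcing $f(p_i)=p_0$ for every $i$ --- incompatible with total ramification. (Also, $\pair{x_0,\dots,x_{q-1}}\sigma$ is not the adding-machine signature; that would be $\pair{x_0,1,\dots,1}\sigma$.) The correct portrait, which is the one the paper constructs explicitly, has post-critical set $\{0,\infty,\zeta^0,\dots,\zeta^{q-2}\}$ with a $2$-cycle $0\leftrightarrow\infty$: the totally ramified critical point sits at $\infty$ and maps to $0$, while $0$ and all the $\zeta^i$ map to $\infty$. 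One then takes $x_0=g_0$, $x_i=g_{\zeta^{i-1}}$, and $g_\infty$ is the inverse of the product of all the generators; the consistency condition that the product of the decorations of $x_0$ around its $q$-cycle be conjugate to $g_{f^{-1}(0)}=g_\infty$ is exactly what forces the decorations of $x_0$ in $H_q$ to be the \emph{inverses} $x_i^{-1}$ --- a feature your portrait cannot account for. Your Riemann--Hurwitz and monodromy-product computations are fine as far as they go, but they concern the branch locus, not the set over which the self-covering must be punctured, and the proof does not go through as written.
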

\begin{proof}
  This follows from the general theory
  of~\cite{bartholdi-dudko:bc2}. The branched covering, and its
  iterated monodromy group, may be explicitly described as follows.

  Consider as post-critical set
  $\{0,\infty,\zeta^0,\dots,\zeta^{q-2}\}$ for the primitive $(q-1)$th
  root of unity $\zeta=\exp(2\pi i/(q-1))$. Put the basepoint $*$
  inside the unit disk, in such a way that it sees
  $\zeta^0,\zeta^1,\dots,\zeta^{q-2},0,\infty$ in cyclic CCW
  order. Put the preimages of $*$ at $*$ and points $*_i$ inside the
  unit disk but very close to $\zeta^i$. As connections between $*$
  and its preimages choose paths $\ell_i$ as straight lines.  Consider
  as generators $g_x$ a straight path from $*$ to $x$, following by a
  small CCW loop around $x$, and back, in the order mentioned above.
  
  The lift of each $g_{\zeta^i}$ will be two homotopic paths
  exchanging $*$ and $*_i$ (all other lifts are trivial) and the lifts
  of $g_\infty$ will be $g_0$ and a straight path from $*_i$ to
  $\zeta_i$ encircling it once CCW before coming back.  It is clear
  that we have defined a branched covering of the sphere with the
  appropriate recursion.
\end{proof}
\begin{conj}
  The branched covering described above is isotopic to a rational map
  of degree $q$.
\end{conj}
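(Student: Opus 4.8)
The plan is to invoke Thurston's characterization of rational maps among postcritically finite branched self-coverings of the sphere (Thurston's theorem, as exposited by Douady--Hubbard). The covering $f$ produced above is postcritically finite, with post-critical set $P=\{0,\infty,\zeta^0,\dots,\zeta^{q-2}\}$ of cardinality $q+1$; the first step is to record its ramification portrait and read off the associated orbifold $\mathcal O_f$. Since $\#P\ge3$ for every $q\ge2$, the orbifold $\mathcal O_f$ is hyperbolic except possibly for a handful of small configurations realizing one of the Euclidean signatures $(2,2,2,2)$, $(3,3,3)$, $(2,4,4)$, $(2,3,6)$; these can occur only when $\#P\le4$, hence only for $q\le3$, and I would dispose of them by hand (for $q=2$ one may use $H_2\cong G_2$ to identify the map directly). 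Granting a hyperbolic orbifold, Thurston's theorem reduces the conjecture to a single assertion: that $f$ carries no \emph{Thurston obstruction}, i.e.\ no $f$-invariant multicurve $\Gamma$ whose Thurston linear map $f_\Gamma$ has leading eigenvalue $\lambda(f_\Gamma)\ge1$. If no such multicurve exists, $f$ is isotopic to a rational map of degree $q$, unique up to M\"obius conjugacy.

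The crux is therefore to exclude Thurston obstructions, and the leverage is the fact, already established above, that $H_q=\operatorname{IMG}(f)$ is \emph{contracting}. Through the correspondence developed in~\cites{nekrashevych:ssg,bartholdi-dudko:bc2}, contraction of the iterated monodromy group is tied to metric \emph{expansion} of the covering: in the self-similar metric on the limit space, $f$ uniformly stretches lengths, and an invariant multicurve of eigenvalue $\ge1$ would single out an annular direction along which lengths fail to grow, contradicting expansion. Concretely, I would argue that any hypothetical invariant multicurve $\Gamma$ pulls back, under $\phi$, to a system of curves each of which is uniformly shortened by the contraction of the nucleus $N=\{x_0^{\pm1},x_1^{\pm1}\}$; this would force every entry of $f_\Gamma$, and hence $\lambda(f_\Gamma)$, below $1$. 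This is the step in which the Thue--Morse recursion and the explicit, $q$-independent nucleus carry the weight.

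The step I expect to be the main obstacle is exactly this passage from contraction of $H_q$ to a uniform bound $\lambda(f_\Gamma)<1$, made uniform over \emph{all} $q$. Although the nucleus is the same two-element (up to inverses) set for every $q$, the number of post-critical points, and with it the combinatorial variety of candidate multicurves, grows with $q$; a naive enumeration is therefore hopeless, and one wants instead a structural estimate bounding the Thurston matrix directly in terms of the contraction coefficient, in the spirit of a ``global curve attractor'' argument. A secondary difficulty is the clean treatment of the exceptional Euclidean orbifolds for the small values of $q$ where they might a priori intrude, and of any degenerate case where $\mathcal O_f$ fails to be hyperbolic; these I would settle separately, matching the covering against a known Latt\`es or explicitly constructed example. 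Once the obstruction is ruled out, Thurston's theorem delivers both the existence and the uniqueness of the rational representative.
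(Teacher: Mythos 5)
First, note that the statement you are proving is presented in the paper as a \emph{conjecture}: the authors do not prove it, and only verify it computationally for $q\le5$ by exhibiting approximate rational maps $f_2,\dots,f_5$. So there is no paper proof to match your argument against; the question is whether your outline actually closes the problem, and it does not.

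The genuine gap is exactly the step you flag yourself: the passage from ``$H_q$ is contracting'' to ``$f$ admits no Thurston obstruction.'' That implication is asserted, not proved, and it is not a theorem you can quote in the form you use it. The standard results in \cite{nekrashevych:ssg} go in the other direction (an expanding, e.g.\ post-critically finite rational, map has contracting iterated monodromy group); the converse --- that contraction of the faithful self-similar quotient forces every invariant multicurve to have $\lambda(f_\Gamma)<1$ --- is precisely the hard content here, and the heuristic ``an invariant annulus would single out a direction where lengths fail to grow'' does not survive scrutiny without substantial input from the machinery of \cite{bartholdi-dudko:bc2} (where the correct criterion is phrased in terms of the sphere biset, not merely the group $H_q$). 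Your sketch of how the two-element nucleus would bound the entries of the Thurston matrix uniformly in $q$ is a research program, not an argument: you give no construction of the metric, no estimate on the entries of $f_\Gamma$, and no mechanism by which the growing combinatorial complexity of multicurves on a sphere with $q+1$ marked points is controlled. The small-$q$ Euclidean orbifold cases ($\#P\le4$, i.e.\ $q\le3$) are likewise deferred rather than settled. In short, the reduction to Thurston's theorem is correctly set up, but the only nontrivial step --- ruling out obstructions for all $q$ --- remains open, which is consistent with the authors leaving the statement as a conjecture.
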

We could verify this conjecture for small $q$; the maps corresponding
to $q\le5$ are
\begin{align*}
  f_2 &\approx \frac{1}{z-0.5z^2},\\
  f_3 &\approx \frac{0.128775+0.0942072i}{z+(-1.74702+0.285702i)z^2+(0.831347-0.190468i)z^3},\\
  f_4 &\approx \frac{0.0232438+0.0757918i}{z+(-2.67804+1.10938i)z^2+(2.37852-1.93187i)z^3+(-0.694865+0.89421i)z^4},\\
  f_5 &\approx \frac{-0.00877156+0.0526634i}{\begin{array}{ll}z+(-3.22614+2.0417i)z^2+(3.13076-5.12089i)z^3\\ \phantom{z}+(-0.677772+4.35662i)z^4+(-0.245783-1.22944i)z^5\end{array}}.
\end{align*}

For $q=2$, when the groups $H_2$ and $G_2$ agree, it would be
particularly interesting to relate the Thue-Morse word $W_2$ with the
geometry of the Julia set of $f_2$. Here is a graph approximating this
Julia set; the path $W_2$ may be traced in it, and may be seen to
explore neighbourhoods of the large Fatou regions:
\[\includegraphics[width=9cm]{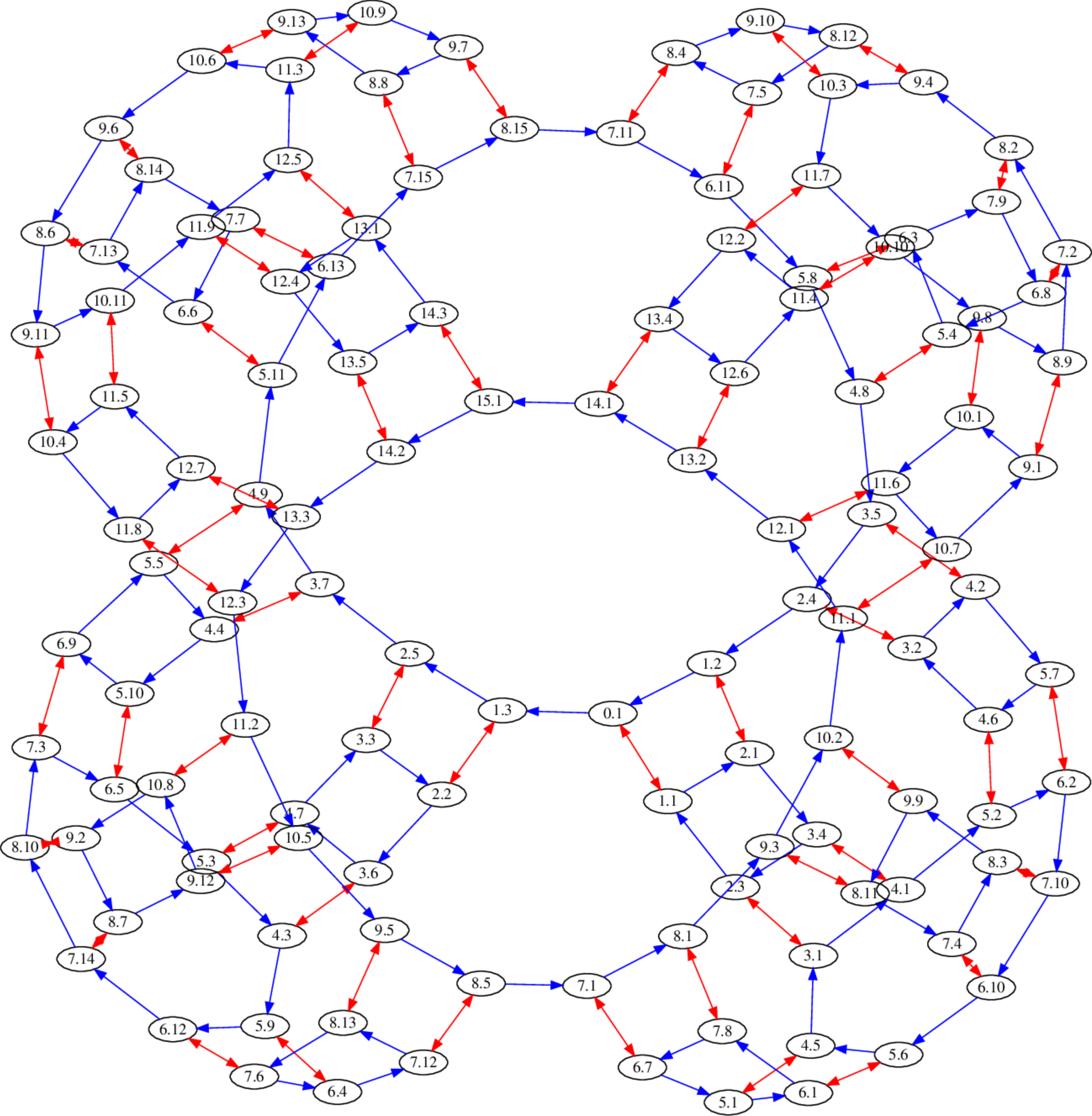}\]

\section*{Acknowledgments}
Caballero is supported by the Air Force Office of Scientific Research through the project ``Verification of quantum cryptography'' (AOARD Grant FA2386-17-1-4022).

\begin{bibsection}
  \begin{biblist}
    \bib{allouche-shallit:thuemorse}{article}{
   author={Allouche, Jean-Paul},
   author={Shallit, Jeffrey},
   title={The ubiquitous Prouhet-Thue-Morse sequence},
   conference={
      title={Sequences and their applications},
      address={Singapore},
      date={1998},
   },
   book={
      series={Springer Ser. Discrete Math. Theor. Comput. Sci.},
      publisher={Springer, London},
   },
   date={1999},
   pages={1--16},
   review={\MR{1843077}},
 }
 
\bib{bartholdi:lpres}{article}{
  author={Bartholdi, Laurent},
  title={Endomorphic presentations of branch groups},
  journal={J. Algebra},
  volume={268},
  date={2003},
  number={2},
  pages={419\ndash 443},
  issn={0021-8693},
  review={\MR {2009317 (2004h:20044)}},
  doi={10.1016/S0021-8693(03)00268-0},
  eprint={arXiv:math/0007062},
}

\bib{bartholdi:branchalgebras}{article}{
  author={Bartholdi, Laurent},
  title={Branch rings, thinned rings, tree enveloping rings},
  journal={Israel J. Math.},
  volume={154},
  date={2006},
  pages={93\ndash 139},
  issn={0021-2172},
  review={\MR {2254535 (2007k:20051)}},
  eprint={arXiv:math.RA/0410226},
}

\bib{bartholdi-dudko:bc2}{article}{
  author={Bartholdi, Laurent},
  author={Dudko, Dzmitry},
  title={Algorithmic aspects of branched coverings II/V. Sphere bisets and their decompositions},
  date={2016},
  eprint={arXiv:math/1603.04059},
  status={submitted},
}

\bib{bondarenko-n:pcf}{article}{
  author={Bondarenko, Ievgen~V.},
  author={Nekrashevych, Volodymyr~V.},
  title={Post-critically finite self-similar groups},
  journal={Algebra Discrete Math.},
  date={2003},
  number={4},
  pages={21\ndash 32},
  issn={1726-3255},
  review={\MR {2070400 (2005d:20041)}},
}

\bib{euwe:math}{article}{
   author={Euwe, Machgielis},
   title={Mathematics---set-theoretic considerations on the game of chess},
   note={Translated from the 1929 German original by Magnus Fabian Nissel},
   journal={New Math. Nat. Comput.},
   volume={12},
   date={2016},
   number={1},
   pages={11--20},
   issn={1793-0057},
   review={\MR{3483410}},
   doi={10.1142/S1793005716500022},
}

 \bib{nekrashevych:ssg}{book}{
  author={Nekrashevych, Volodymyr~V.},
  title={Self-similar groups},
  series={Mathematical Surveys and Monographs},
  volume={117},
  publisher={American Mathematical Society, Providence, RI},
  date={2005},
  pages={xii+231},
  isbn={0-8218-3831-8},
  review={\MR {2162164 (2006e:20047)}},
  doi={10.1090/surv/117},
}

\bib{palacios-huerta:tournaments}{article}{
  author= {Palacios-Huerta, Ignacio},
  title = {Tournaments, fairness and the Prouhet-Thue-Morse sequence},
  journal = {Economic Inquiry},
  volume = {50},
  number = {3},
  pages = {848-849},
  doi = {10.1111/j.1465-7295.2011.00435.x},
eprint = {https://onlinelibrary.wiley.com/doi/pdf/10.1111/j.1465-7295.2011.00435.x},
year = {2012}
}

\bib{sidki:primitive}{article}{
  author={Sidki, Said~N.},
  title={A primitive ring associated to a Burnside $3$-group},
  date={1997},
  issn={0024-6107},
  journal={J. London Math. Soc. (2)},
  volume={55},
  number={1},
  pages={55\ndash 64},
  review={\MR {97m:16006}},
}

\end{biblist}
\end{bibsection}

\end{document}